\providecommand{\U}[1]{\protect\rule{.1in}{.1in}}
\newtheorem{theorem}{Theorem}
\theoremstyle{plain}
\newtheorem{corollary}{Corollary}
\newtheorem{lemma}{Lemma}
\newtheorem{proposition}{Proposition}
\newtheorem{remark}{Remark}
\numberwithin{equation}{section}
\begin{document}
\title[Irrationality exponents of semi-regular continued fractions]{ Irrationality exponents of semi-regular continued fractions}
\author{Daniel Duverney}
\address{110, rue du chevalier fran\c{c}ais, 59000 Lille, France}
\email{daniel.duverney@orange.fr}
\author{Iekata Shiokawa}
\address{13-43, Fujizuka-cho, Hodogaya-ku, Yokohama 240-0031, Japan}
\email{shiokawa@beige.ocn.ne.jp}
\date{January 30, 2022}
\subjclass{11A55, 11J70, 11J82}
\keywords{Irrationality exponent, semi-regular continued fraction, negative continued fraction}

\begin{abstract}
We prove that the formula giving the exact value of the irrationality exponent
of regular continued fractions remains valid for semi-regular continued
fractions satisfying certain conditions.

\end{abstract}
\maketitle

\section{Introduction}

An infinite continued fraction%
\begin{equation}
\alpha:=b_{0}+\frac{a_{1}}{b_{1}}%
\genfrac{}{}{0pt}{}{{}}{+}%
\frac{a_{2}}{b_{2}}%
\genfrac{}{}{0pt}{}{{}}{+\cdots+}%
\frac{a_{n}}{b_{n}}%
\genfrac{}{}{0pt}{}{{}}{+\cdots}
\label{Int1}%
\end{equation}
is called \textit{semi-regular} if $b_{0}\in\mathbb{Z}$ and%
\begin{equation}
a_{n}\in\left\{  -1,1\right\}  ,\qquad b_{n}\in\mathbb{Z}_{>0},\qquad
b_{n}+a_{n+1}\geq1\qquad\left(  n\geq1\right)  , \label{Int2}%
\end{equation}
with the additional condition%
\begin{equation}
b_{n}+a_{n+1}\geq2\qquad\text{infinitely often.} \label{Cond1}%
\end{equation}
It is known that semi-regular continued fractions (SRCF) are well defined and
convergent by Tietze Theorem \cite{Tietze}. Examples of SRCF (among others) are:

(i) The \textit{regular }continued fractions (RCF), where $a_{n}=1$ for every
$n\geq1.$

(ii) The \textit{negative }continued fractions (NCF), also known as
\textit{backward }continued fractions, where $a_{n}=-1$ for every $n\geq1$
\cite{Cahen}$.$

(iii) The \textit{nearest integer }continued fractions (NICF), where
$b_{n}+a_{n+1}\geq2$ and $b_{n}\geq2$ for every $n\geq1$ \cite{Per}.

(iv) The \textit{singular} continued fractions (SCF), where $b_{n}+a_{n}\geq2$
and $b_{n}\geq2$ for every $n\geq1$ \cite{Per}.

(v) The \textit{Lehner} continued fractions (LCF), where $(b_{n}%
,a_{n+1})=(1,1)$ or $(2,-1)$ for every $n\geq0$ (\cite{Daj}, \cite{Leh}%
).\medskip

It should be noted that the condition (\ref{Cond1}) is always realized if
there exist infinitely many $n$ such that $a_{n}=1.$ So (\ref{Cond1}) is
always realized, except if there exists $n$ such that
\begin{equation}
x_{n}:=\frac{a_{n+1}}{b_{n+1}}%
\genfrac{}{}{0pt}{}{{}}{+}%
\frac{a_{n+2}}{b_{n+2}}%
\genfrac{}{}{0pt}{}{{}}{+}%
\frac{a_{n+3}}{b_{n+3}}%
\genfrac{}{}{0pt}{}{{}}{+\cdots}%
\qquad\left(  n\geq0\right)  \label{xn2}%
\end{equation}
is a NCF and $b_{k}=2$ for all large $k$ (see Remark \ref{RemIrrat} below).
Hence in the definition of SRCF the condition (\ref{Cond1}) can be replaced
by\medskip

\qquad\textit{If }$a_{n}=-1$\textit{\ for all large }$n,$\textit{\ then
}$b_{n}\geq3$\textit{\ infinitely often.\medskip}

This shows that NCF are of special importance among SRCF. \medskip

The story of SRCF seems to begin with Lagrange, who mentioned in
\cite[Sections 6 and 7]{Lag} the possibility of introducing signs in the
partial numerators of regular continued fractions and gave the way for
transforming such continued fractions into regular ones. Tietze \cite{Tietze}
proved the convergence of SRCF and the irrationality of their values. Chapter
5 of Perron's classical book \cite{Per} is entirely devoted to SRCF. Holzbaur
and Riederle \cite{Holz} characterized SRCF which represent quadratic
irrationals. Pinner \cite{Pin} computed certain inhomogeneous approximation
constants by using NCF expansions. Sarma and Kushwaha \cite{SK} examined
precisely the algorithm to convert a SRCF into regular, even or odd continued fractions.

SRCF have also been widely discussed from the metric theoretical point of view
(see \cite{Daj} and \cite{Kr}). In particular, the map attached to NCF defined
by R\'{e}nyi \cite{Ren} has been studied by many authors (see \cite{Adler} and
the recent papers \cite{Ito}, \cite{Ta}).\medskip

Let $\alpha$ be the SRCF defined by (\ref{Int1}). As usual, we define for
$n\geq1$%
\begin{equation}
\left\{
\begin{array}
[c]{lll}%
p_{-1}=1, & p_{0}=b_{0}, & p_{n}=b_{n}p_{n-1}+a_{n}p_{n-2},\\
q_{-1}=0, & q_{0}=1, & q_{n}=b_{n}q_{n-1}+a_{n}q_{n-2}.
\end{array}
\right.  \label{Int5}%
\end{equation}
It is well known (\cite{Duv}, \cite{JoTh}, \cite{Per}) that%
\begin{equation}
b_{0}+\frac{a_{1}}{b_{1}}%
\genfrac{}{}{0pt}{}{{}}{+}%
\frac{a_{2}}{b_{2}}%
\genfrac{}{}{0pt}{}{{}}{+\cdots+}%
\frac{a_{n}}{b_{n}}=\frac{p_{n}}{q_{n}}\qquad\left(  n\geq1\right)  ,
\label{Int6}%
\end{equation}
and an easy induction using (\ref{Int5}) shows that%
\begin{equation}
p_{n}q_{n-1}-p_{n-1}q_{n}=\left(  -1\right)  ^{n-1}a_{1}a_{2}\cdots
a_{n}\qquad\left(  n\geq1\right)  , \label{Int7}%
\end{equation}
which yields immediately%
\begin{equation}
\frac{p_{n}}{q_{n}}=b_{0}+\sum_{k=1}^{n}\frac{\left(  -1\right)  ^{k-1}%
a_{1}a_{2}\cdots a_{k}}{q_{k-1}q_{k}}. \label{Int8}%
\end{equation}
By (\ref{Int7}), we observe that $p_{n}$ and $q_{n}$ are coprime for all
$n\geq1.\medskip$

For a real number $\alpha,$ the irrationality exponent $\mu\left(
\alpha\right)  $ is defined by the infimum of the set of numbers $\mu$ for
which the inequality%
\begin{equation}
\left\vert \alpha-\frac{p}{q}\right\vert <\frac{1}{q^{\mu}} \label{1.1}%
\end{equation}
has only finitely many rational solutions $p/q$, or equivalently the supremum
of the set of numbers $\mu$ for which the inequality (\ref{1.1}) has
infinitely many solutions. If $\alpha$ is irrational, then $\mu\left(
\alpha\right)  \geq2$. If $\alpha$ is a real algebraic irrational number, then
$\mu\left(  \alpha\right)  =2$ by Roth's theorem \cite{Roth}. If $\mu\left(
\alpha\right)  =\infty,$ then $\alpha$ is called a Liouville number.

When $\alpha$ defined by (\ref{Int1}) is a regular continued fraction, that is
when $a_{n}=1$ for all $n\geq1,$ it is known that%
\begin{equation}
\mu\left(  \alpha\right)  =1+\limsup_{n\rightarrow\infty}\frac{\log q_{n+1}%
}{\log q_{n}}=2+\limsup_{n\rightarrow\infty}\frac{\log b_{n+1}}{\log q_{n}}
\label{Form}%
\end{equation}
(see for example \cite[Theorem 1]{Sondow}). The main purpose of this paper is
to examine to what extent this formula applies to SRCF. Our main result is

\begin{theorem}
\label{Th1}Let $\alpha$ be the SRCF defined by (\ref{Int1}) with (\ref{Int2})
and (\ref{Cond1}). Assume that one of the following conditions holds:

(A) The number of consecutive $n$ such that $a_{n}=-1$ is bounded.

(B) $b_{n}\geq2$ for all large $n$ and the number of consecutive $n$ such that
$b_{n}=2$ is bounded.

(C) $b_{n}+a_{n}\geq2$ for all large $n.$

(D) $b_{n}\geq2$ and $b_{n}+a_{n+1}\geq2$ for all large $n.$

Then (\ref{Form}) holds.
\end{theorem}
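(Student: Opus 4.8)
The plan is to obtain \eqref{Form} for the SRCF $\alpha$ from the already-known case of the \emph{regular} continued fraction expansion of $\alpha$. Write $P_m/Q_m$ for the convergents of the RCF of $\alpha$, so that $\mu(\alpha)=1+\limsup_{m}(\log Q_{m+1})/(\log Q_m)$ is known. A routine preliminary is that the two expressions in \eqref{Form} agree for \emph{every} SRCF: starting from $q_{n+1}=b_{n+1}q_n+a_{n+1}q_{n-1}$ a direct estimate shows that for $n$ with $b_{n+1}\ge 2$ one has $\log q_{n+1}=\log q_n+\log b_{n+1}+O(1)$, while for $n$ with $b_{n+1}=1$ the quotient $(\log q_{n+1})/(\log q_n)$ stays below $1+o(1)$ and $(\log b_{n+1})/(\log q_n)=0$; dividing by $\log q_n$ and taking $\limsup$ gives $\limsup_n(\log q_{n+1})/(\log q_n)=1+\limsup_n(\log b_{n+1})/(\log q_n)$. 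Hence the theorem reduces to the single assertion
\[
\limsup_{m\to\infty}\frac{\log Q_{m+1}}{\log Q_m}=\limsup_{n\to\infty}\frac{\log q_{n+1}}{\log q_n}.
\]

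The inequality ``$\ge$'' needs no hypothesis. Using \eqref{Int7} and the $(n{+}2)$-nd complete quotient $\beta_{n+2}$ of the SRCF, one checks the exact identity
\[
\bigl|q_n\alpha-p_n\bigr|=\frac{1}{q_{n+1}+a_{n+2}q_n/\beta_{n+2}},\qquad\beta_m>1\quad(m\ge1),
\]
the bound $\beta_m>1$ following from $b_m\ge1$ together with the implication $b_m=1\Rightarrow a_{m+1}=1$ contained in \eqref{Int2}. Consequently $|\alpha-p_n/q_n|\le 1/\bigl(q_n(q_{n+1}-q_n)\bigr)$ whenever $q_{n+1}>q_n$; along a subsequence realizing $\ell:=\limsup_n(\log q_{n+1})/(\log q_n)$ (assuming $\ell>1$, otherwise $\mu(\alpha)\ge2=1+\ell$ holds trivially) one has $q_{n+1}/q_n\to\infty$, hence $|\alpha-p_n/q_n|\le 2\,q_n^{-(1+\log q_{n+1}/\log q_n)}$ infinitely often, and so $\mu(\alpha)\ge1+\ell$. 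Combined with the known formula for the RCF this already yields ``$\ge$'' in the displayed equality.

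For ``$\le$'' fix $\mu'>1+\ell\ (\ge2)$. Since $\mu'>2$, every $p/q$ in lowest terms with $|\alpha-p/q|<q^{-\mu'}$ and $q$ large is a convergent $P_m/Q_m$ of the RCF, and then $|\alpha-P_m/Q_m|\asymp 1/(Q_mQ_{m+1})$; comparing with $q^{-\mu'}$ shows it suffices to prove $\limsup_m(\log Q_{m+1})/(\log Q_m)\le\ell$, i.e. that every large jump of the RCF is matched, at the same logarithmic scale, by a large jump of the SRCF. Here one invokes the classical algorithm converting the SRCF of $\alpha$ into its RCF (Lagrange, Perron \cite{Per}, Sarma--Kushwaha \cite{SK}); in particular each $q_n$ is a semiconvergent denominator of the RCF. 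A large jump $(\log Q_{m+1})/(\log Q_m)$ corresponds to a large partial quotient $c_{m+1}$ of the RCF, and under each of (A)--(D) one tracks where it originates. If the runs of $a_n=-1$ are bounded (A), a maximal such run converts into a bounded block of RCF partial quotients, so a large $c_{m+1}$ forces a large $b_{n+1}$ with $\log q_n\asymp\log Q_m$. If $b_n\ge2$ eventually and the runs of $b_n=2$ are bounded (B), this disables the mechanism --- visible already for NCF --- whereby a long run of $2$'s produces one large RCF partial quotient while $(q_n)$ grows only arithmetically, so again a large $c_{m+1}$ forces a large $b_{n+1}$. Under (C) or (D) the SRCF is close enough to regular that $(q_n)$ runs through the semiconvergents with bounded multiplicative gaps, so the two jump-sequences share the same $\limsup$. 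In every case one concludes $\limsup_m(\log Q_{m+1})/(\log Q_m)\le\limsup_n(\log q_{n+1})/(\log q_n)$.

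I expect the last step --- making the conversion algorithm explicit and verifying, for each of the four hypotheses, that it prevents a large RCF partial quotient from being diluted into many short steps of $(q_n)$ (the phenomenon that makes \eqref{Form} fail for a general SRCF) --- to be the main obstacle; the remainder is bookkeeping around the identity for $|q_n\alpha-p_n|$ and the reduction to RCF convergents.
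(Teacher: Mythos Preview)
Your approach is genuinely different from the paper's, and the step you yourself flag as ``the main obstacle'' is indeed left undone. The paper never compares the SRCF to the regular expansion of $\alpha$. Instead it proves a self-contained approximation criterion (Theorem~\ref{ThIrratGen}): if the $p_n/q_n$ are coprime, $q_n\to\infty$, $q_{n+1}\ge\rho q_n$, and $\sigma/(q_nq_{n+1})\le|\alpha-p_n/q_n|\le\tau/(q_nq_{n+1})$ for positive constants $\rho,\sigma,\tau$, then $\mu(\alpha)=1+\limsup_n(\log q_{n+1})/(\log q_n)$. Since the SRCF convergents satisfy the exact identity $|\alpha-p_n/q_n|=1/(q_n|\xi_{n+1}|)$ with $\xi_{n+1}=q_{n+1}+x_{n+1}q_n$, the whole theorem reduces to checking that under each of (A)--(D) one has $\xi_{n+1}\asymp q_{n+1}$ and $q_{n+1}\ge\rho q_n$. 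This is pure growth analysis of $(q_n)$, handled by two short lemmas (Lemmas~\ref{LemNegative} and~\ref{Lemb_n}) bounding $q_{m+k}/q_{m+k-1}$ along a run of $a_j=-1$ or of $b_j=2$; no conversion algorithm, no RCF convergents $P_m/Q_m$, and no Legendre-type step are needed.

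Your route, by contrast, requires for each of (A)--(D) an explicit control of how the Lagrange--Perron conversion matches large SRCF jumps with large RCF partial quotients; the assertions you rely on (``each $q_n$ is a semiconvergent denominator of the RCF'', ``$(q_n)$ runs through the semiconvergents with bounded multiplicative gaps'') are plausible but unproved here, and making them precise case by case is at least as much work as the paper's direct estimates. A smaller gap: your ``routine preliminary'' that the two expressions in \eqref{Form} coincide for \emph{every} SRCF uses $\log q_{n+1}=\log q_n+\log b_{n+1}+O(1)$ when $b_{n+1}\ge2$, which already needs $q_{n-1}/q_n$ bounded. The paper states this reduction only under that boundedness hypothesis and then verifies the bound separately under (A)--(D); you should not claim it unconditionally.
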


By $(C)$ and $(D)$, we see that (\ref{Form}) holds for all SCF and NICF
respectively, but it is not valid in general for LCF. Indeed, (\ref{Form})
implies that $\mu\left(  \alpha\right)  =2$ as soon as $b_{n}$ is bounded
since $q_{n}\rightarrow\infty$ by Tietze Theorem. As any irrational number
$\alpha\in\left]  1,2\right[  $ can be expanded uniquely in a LCF \cite{Leh},
(\ref{Form}) is false for LCF if $\mu\left(  \alpha\right)  >2.$ On the other
hand, $(A)$ implies that $\mu\left(  \alpha\right)  =2$ for all LCF for which
the number of consecutive $n$ such that $\left(  b_{n},a_{n+1}\right)
=\left(  2,-1\right)  $ is bounded.

Formula (\ref{Form}) is not valid either for all NCF (see Section \ref{SecNCF}
below). However, $(B)$ implies that (\ref{Form}) holds for all NCF for which
the number of consecutive $n$ such that $b_{n}=2$ is bounded.

By adding a condition, we can deduce from Theorem \ref{Th1} an expression of
the irrationality exponent of $\alpha$ in terms of the $b_{n}$'s.

\begin{corollary}
\label{Cor1.1}Let $\alpha$ be as in Theorem \ref{Th1}. Assume that
\[
\lim_{n\rightarrow\infty}\frac{n}{\log\left(  b_{1}b_{2}\cdots b_{n}\right)
}=0.
\]
Then%
\begin{equation}
\mu\left(  \alpha\right)  =2+\limsup_{n\rightarrow\infty}\frac{\log b_{n+1}%
}{\log(b_{1}b_{2}\cdots b_{n})}. \label{Form1}%
\end{equation}

\end{corollary}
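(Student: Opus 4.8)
The plan is to deduce Corollary~\ref{Cor1.1} from Theorem~\ref{Th1} by showing that the extra hypothesis $n/\log(b_1b_2\cdots b_n)\to 0$ forces the two $\limsup$ expressions in \eqref{Form} and \eqref{Form1} to coincide. By Theorem~\ref{Th1}, under any of the conditions (A)--(D) we already have
\[
\mu(\alpha)=2+\limsup_{n\to\infty}\frac{\log b_{n+1}}{\log q_n},
\]
so it suffices to prove that $\log q_n$ and $\log(b_1b_2\cdots b_n)$ are asymptotically equivalent, i.e.\ $\log q_n \sim \log(b_1\cdots b_n)$ as $n\to\infty$; then dividing $\log b_{n+1}$ by either one gives the same $\limsup$.

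First I would establish the two-sided estimate for $q_n$. For the upper bound, from \eqref{Int5} and $a_n\in\{-1,1\}$, $b_n\geq 1$ one gets $q_n = b_n q_{n-1}+a_n q_{n-2}\leq b_n q_{n-1}+q_{n-2}\leq (b_n+1)q_{n-1}$, hence by induction $q_n\leq \prod_{k=1}^n(b_k+1)$. Taking logarithms,
\[
\log q_n\leq\sum_{k=1}^n\log(b_k+1)=\sum_{k=1}^n\log b_k+\sum_{k=1}^n\log\!\Bigl(1+\tfrac1{b_k}\Bigr)\leq\log(b_1\cdots b_n)+n\log 2 .
\]
For the lower bound one needs $q_n\geq c\, b_1b_2\cdots b_n$ for a suitable constant, or at least $\log q_n\geq\log(b_1\cdots b_n)-Cn$ for some constant $C$; this follows from a standard fact about SRCF convergents, namely that $q_n\geq q_{n-1}$ eventually (the $q_n$ are nondecreasing for large $n$ because $b_n+a_n\geq 1$, with the gaps controlled by condition \eqref{Cond1}), combined with the recursion to get $q_n\geq b_n q_{n-1}-q_{n-2}\geq (b_n-1)q_{n-1}$ once $q_{n-1}\geq q_{n-2}$, and then iterating. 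One must be slightly careful where $b_n=1$, but in that range $b_n-1=0$ is harmless since one can instead use that two consecutive steps multiply $q$ by at least a fixed amount; in any case one obtains $\log q_n\geq\log(b_1\cdots b_n)-Cn$.

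Now divide through by $\log(b_1\cdots b_n)$. The hypothesis $\lim_{n\to\infty} n/\log(b_1b_2\cdots b_n)=0$ gives
\[
1-\frac{Cn}{\log(b_1\cdots b_n)}\leq\frac{\log q_n}{\log(b_1\cdots b_n)}\leq 1+\frac{n\log 2}{\log(b_1\cdots b_n)},
\]
so $\log q_n/\log(b_1\cdots b_n)\to 1$. Consequently
\[
\limsup_{n\to\infty}\frac{\log b_{n+1}}{\log q_n}
=\limsup_{n\to\infty}\frac{\log b_{n+1}}{\log(b_1\cdots b_n)}\cdot\frac{\log(b_1\cdots b_n)}{\log q_n}
=\limsup_{n\to\infty}\frac{\log b_{n+1}}{\log(b_1\cdots b_n)},
\]
where the last equality uses that the second factor tends to $1$ (and all quantities are nonnegative). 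Substituting into $\mu(\alpha)=2+\limsup_n \log b_{n+1}/\log q_n$ yields \eqref{Form1}. The only real obstacle is pinning down the lower bound $\log q_n\geq\log(b_1\cdots b_n)-Cn$ cleanly despite possible runs of $b_n=1$; this is where the general SRCF structure (rather than the RCF case) requires the mild bookkeeping sketched above, but it is routine given the monotonicity properties of $q_n$ guaranteed by \eqref{Int2} and \eqref{Cond1}.
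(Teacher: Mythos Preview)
Your overall strategy---reduce to $\log q_n/\log(b_1\cdots b_n)\to 1$ via two-sided bounds $\log(b_1\cdots b_n)-Cn\le \log q_n\le \log(b_1\cdots b_n)+C'n$---is exactly the paper's. The gap is in the lower bound.

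You assert that the lower bound is ``routine given the monotonicity properties of $q_n$ guaranteed by \eqref{Int2} and \eqref{Cond1}.'' But \eqref{Int2} and \eqref{Cond1} alone do \emph{not} make $(q_n)$ eventually nondecreasing: by Proposition~\ref{Lem1} one has $q_n<q_{n-1}$ whenever $b_n=1$ and $a_n=-1$, and this can occur infinitely often under hypothesis~(A). Even under (B), (C), (D), where monotonicity does hold, your inequality $q_n\ge(b_n-1)q_{n-1}$ collapses when $b_n=1$, which is permitted under (C). Your fallback that ``two consecutive steps multiply $q$ by at least a fixed amount'' gives at best $q_n\ge c^n$ and says nothing about $q_n/\prod_{k\le n}b_k$. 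What is actually needed is $q_{n+1}\ge\kappa\,b_{n+1}q_n$ for a fixed $\kappa>0$, and this is \emph{not} a consequence of the general SRCF axioms---indeed the Corollary is false for general SRCF (e.g.\ LCF), so the argument must use (A)--(D) somewhere, and yours never does.

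The paper proves $q_{n+1}\ge\kappa\,b_{n+1}q_n$ by a separate short argument for each of (A)--(D): case~(A) uses Lemma~\ref{LemNegative} to get $\kappa=1/L$ where $L$ bounds the runs of $a_n=-1$; case~(B) uses Lemma~\ref{Lemb_n} to get $\kappa=1/(M+1)$; cases~(C) and~(D) use that $a_{n+1}=-1$ forces $b_{n+1}\ge 3$ (resp.\ $b_{n+1}\ge 2$) together with $q_n>q_{n-1}$. These lemmas control how slowly $q_n$ can grow inside a run of $a_n=-1$ or of $b_n=2$, and they are where the hypotheses of Theorem~\ref{Th1} actually enter. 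Once you supply this case analysis your proof is complete.

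A smaller point: your upper-bound step $q_n\le(b_n+1)q_{n-1}$ also assumes $q_{n-2}\le q_{n-1}$, which can fail for the same reason. The paper avoids this by proving $q_n\le F_{n+1}\prod_{k\le n}b_k$ via the normalized recursion, which needs no monotonicity.
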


Note that (\ref{Form1}) also results from \cite[Corollary 4]{DS} when the
series $\sum$ $\left(  b_{n}b_{n+1}\right)  ^{-1}$ is convergent and, in the
case of regular continued fractions, from \cite[Corollary 2]{Sondow}.

In order to prove Theorem \ref{Th1}, we will need to present in some details
the bases of the theory of SRCF. This theory is exposed in Perron's book
\cite{Per} and, to our knowledge, it has not been updated since then. Besides,
some proofs can be simplified, as we are interested here only in infinite
SRCF. Thus, we will study first some basic properties of infinite SRCF in
Section \ref{SecConv}. In Sections \ref{SecNCF} and \ref{SecLCF} respectively
we will prove formulas for the irrationality exponents of NCF and LCF by
transforming them into RCF. In particular, we will give in Theorem
\ref{ThHancl} examples of NCF for which (\ref{Form}) is not valid. In Section
\ref{SecIrrExp}, we will state and prove Theorem \ref{ThIrratGen}, which gives
a rather general formula allowing to compute irrationality exponents. In
Section \ref{SecProof1} we will prove Theorem \ref{Th1} and Corollary
\ref{Cor1.1}. Finally, we will give some examples of application of Corollary
\ref{Cor1.1} in Section \ref{SecEx}.

\section{Convergence and irrationality of infinite SRCF}

\label{SecConv}In this section we recall some basic results on infinite SRCF.

\begin{proposition}
\label{Lem1}Let $q_{n}$ be defined by (\ref{Int5}). We have for $n\geq0$%
\[
\left\{
\begin{array}
[c]{l}%
q_{n}\geq1\\
q_{n}+a_{n+1}q_{n-1}\geq1\\
q_{n+1}>q_{n}\quad\text{if}\quad b_{n+1}\geq2\text{ or }a_{n+1}=1\\
q_{n+2}>q_{n}>q_{n+1}\quad\text{if}\quad b_{n+1}=1\text{ and }a_{n+1}=-1.
\end{array}
\right.
\]

\end{proposition}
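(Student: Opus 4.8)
The plan is to establish the first two bounds, $q_n\ge 1$ and $q_n+a_{n+1}q_{n-1}\ge 1$, \emph{simultaneously} by induction on $n\ge 0$, and then to read off the two monotonicity statements as immediate consequences by substituting into the recurrence $q_{n+1}=b_{n+1}q_n+a_{n+1}q_{n-1}$. The decisive observation making the coupled induction work is the algebraic identity $b_{n+1}q_n+a_{n+1}q_{n-1}=(b_{n+1}-1)q_n+(q_n+a_{n+1}q_{n-1})$: when $a_{n+1}=-1$, bare positivity $q_n\ge 1$ is not enough to control $q_{n+1}$, but after peeling off one factor of $b_{n+1}$ the auxiliary quantity $q_n+a_{n+1}q_{n-1}$ reappears, which is precisely the second inductive invariant.

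For the base case $n=0$ one has $q_0=1$ and $q_0+a_1q_{-1}=1$. For the inductive step, assume $q_n\ge 1$ and $q_n+a_{n+1}q_{n-1}\ge 1$. If $a_{n+1}=1$, then $q_{n+1}=b_{n+1}q_n+q_{n-1}\ge q_n\ge 1$; if $a_{n+1}=-1$, the identity above gives $q_{n+1}=(b_{n+1}-1)q_n+(q_n-q_{n-1})\ge 0+1=1$ since $b_{n+1}\ge 1$ and $q_n-q_{n-1}=q_n+a_{n+1}q_{n-1}\ge 1$. For the second invariant at $n+1$, i.e. $q_{n+1}+a_{n+2}q_n\ge 1$: the case $a_{n+2}=1$ is trivial, and the case $a_{n+2}=-1$ is exactly where the defining condition \eqref{Int2} enters, since $b_{n+1}+a_{n+2}\ge 1$ then forces $b_{n+1}\ge 2$; writing $q_{n+1}-q_n=(b_{n+1}-1)q_n+a_{n+1}q_{n-1}$ and splitting on the sign of $a_{n+1}$ (using $b_{n+1}-1\ge 1$, and, when $a_{n+1}=-1$, the bound $q_n-q_{n-1}\ge 1$ once more) yields $q_{n+1}-q_n\ge 1$. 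This closes the induction.

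Finally, for the monotonicity lines: if $b_{n+1}\ge 2$ or $a_{n+1}=1$, substituting the recurrence and using $q_n\ge 1$, $q_{n-1}\ge 1$ (for $n\ge 1$; for $n=0$ one checks $q_1=b_1$ directly) together with $q_n-q_{n-1}\ge 1$ in the subcase $a_{n+1}=-1$ gives $q_{n+1}>q_n$. If instead $b_{n+1}=1$ and $a_{n+1}=-1$, then \eqref{Int2} forces $a_{n+2}=1$, so $q_{n+1}=q_n-q_{n-1}<q_n$ and $q_{n+2}=b_{n+2}q_{n+1}+q_n\ge q_{n+1}+q_n>q_n$. The only real obstacle is the inductive step for the two basic inequalities — specifically, recognizing that they must be carried together and that the case $a_{n+2}=-1$ is handled by invoking $b_{n+1}\ge 2$; once that is in place, everything else is routine substitution.
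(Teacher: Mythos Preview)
Your proof is correct and follows essentially the same plan as the paper: carry the two inequalities $q_n\ge 1$ and $q_n+a_{n+1}q_{n-1}\ge 1$ together by induction, then read off the monotonicity claims from the recurrence. The only difference is cosmetic: where you split into cases on the signs of $a_{n+1}$ and $a_{n+2}$, the paper handles both inductive steps in one stroke by writing
\[
q_{n+1}=b_{n+1}q_n+a_{n+1}q_{n-1}\ge q_n+a_{n+1}q_{n-1}\ge 1,
\qquad
q_{n+1}+a_{n+2}q_n=(b_{n+1}+a_{n+2})q_n+a_{n+1}q_{n-1}\ge q_n+a_{n+1}q_{n-1}\ge 1,
\]
using $b_{n+1}\ge 1$ and the SRCF condition $b_{n+1}+a_{n+2}\ge 1$ directly, with no case analysis.
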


\begin{proof}
We prove first by induction that
\begin{equation}
q_{n}\geq1\quad\text{and}\quad q_{n}+a_{n+1}q_{n-1}\geq1 \label{Lemm2}%
\end{equation}
by following Offer \cite{Offer}. When $n=0,$ $q_{0}+a_{1}q_{-1}=q_{0}=1$ by
(\ref{Int5}). Assume that (\ref{Lemm2}) holds for some $n\geq0.$ Then by
(\ref{Int1}), (\ref{Int2}) and (\ref{Int5})%
\[
q_{n+1}=b_{n+1}q_{n}+a_{n+1}q_{n-1}\geq q_{n}+a_{n+1}q_{n-1}\geq1
\]
and $q_{n+1}+a_{n+2}q_{n}=\left(  b_{n+1}+a_{n+2}\right)  q_{n}+a_{n+1}%
q_{n-1}\geq q_{n}+a_{n+1}q_{n-1}\geq1,$ which proves (\ref{Lemm2}). If
$b_{n+1}\geq2,$ then by (\ref{Lemm2})%
\[
q_{n+1}\geq2q_{n}+a_{n+1}q_{n-1}\geq q_{n}+1,
\]
and the same inequation holds evidently if $a_{n+1}=1.$ Finally, if
$b_{n+1}=1$ and $a_{n+1}=-1,$ then $a_{n+2}=1$ by (\ref{Int2}), and so
$q_{n+2}=b_{n+2}q_{n+1}+q_{n}>q_{n}$ by (\ref{Lemm2}), which completes the
proof of Proposition \ref{Lem1}.
\end{proof}

Proposition \ref{Lem1} shows that the the convergents of the infinite SRCF
defined by (\ref{Int1}) and (\ref{Int2}) are well defined since $q_{n}\geq1$
for all $n\geq0.$ It also shows that the sequence $q_{n}$ is not always
increasing, for example in the case of the Lehner continued fraction%
\[
1+\frac{\sqrt{2}}{2}=1+\frac{1}{2}%
\genfrac{}{}{0pt}{}{{}}{-}%
\frac{1}{1}%
\genfrac{}{}{0pt}{}{{}}{+}%
\frac{1}{2}%
\genfrac{}{}{0pt}{}{{}}{-}%
\frac{1}{1}%
\genfrac{}{}{0pt}{}{{}}{+}%
\frac{1}{2}%
\genfrac{}{}{0pt}{}{{}}{-}%
\frac{1}{1}%
\genfrac{}{}{0pt}{}{{}}{+\cdots}%
.
\]
However, the sequence $q_{n}$ is increasing in the cases of regular, negative,
nearest integer and singular SRCF.

\begin{proposition}
\label{ThDef} (Tietze Theorem) The infinite SRCF defined by (\ref{Int1}) and
(\ref{Int2}) is convergent and $\lim_{n\rightarrow\infty}q_{n}=+\infty.$
\end{proposition}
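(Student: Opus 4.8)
The plan is to prove the two assertions separately: first that $q_n\to\infty$, and then that the convergents $p_n/q_n$ form a Cauchy sequence, whose limit is by definition the value $\alpha$. For the second I would use the identity (\ref{Int8}): since $|a_1a_2\cdots a_k|=1$, it reduces the convergence of $p_n/q_n$ to the estimate
\[
\sum_{k\ge 1}\frac{1}{q_{k-1}q_k}<\infty .
\]
Both assertions rest on the same analysis of the recursion (\ref{Int5}) under (\ref{Int2}). By Proposition \ref{Lem1}, $q_{n}>q_{n-1}$ for every $n$ except when $b_{n}=1$ and $a_{n}=-1$; I will call such an $n$ an \emph{exceptional index}, and I record from (\ref{Int2}) that an exceptional index $m$ forces $a_{m+1}=1$ and $b_{m-1}\ge 2$. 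Hence exceptional indices are isolated, each preceded by a step with $b\ge 2$ and followed by a step with $a=1$; between two consecutive exceptional indices $m_i<m_{i+1}$ the sequence $q$ is strictly increasing, while $q_{m_i}=q_{m_i-1}-q_{m_i-2}<q_{m_i-1}$.

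To get $q_n\to\infty$ I would control the "dips" $q_{m_i}$ at the exceptional indices by proving
\[
q_{m_{i+1}}\ \ge\ q_{m_i}+q_{m_i-1}\ >\ 2q_{m_i},
\]
so that the dips, hence all the $q_n$, grow at least geometrically in $i$. At an exceptional index $m$ one has $q_m=q_{m-1}-q_{m-2}=(b_{m-1}-1)q_{m-2}+a_{m-1}q_{m-3}$; I would peel off the maximal block of steps of type $(b,a)=(2,-1)$ sitting immediately before index $m-1$, noting that along such a block $q_j=2q_{j-1}-q_{j-2}$, so the consecutive differences $q_j-q_{j-1}$ are constant and $q_m$ equals that common difference $q_{l-1}-q_{l-2}$, where the step at index $l-1$ has $b_{l-1}\ge 2$ (it precedes a step with $a=-1$) and is not of type $(2,-1)$, i.e. $a_{l-1}=1$ or $b_{l-1}\ge 3$. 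In either case $q_{l-1}-q_{l-2}\ge q_{l-2}+1$ by the second relation in Proposition \ref{Lem1}; and since the block lies in $\{m_i+1,\dots,m-1\}$ while $b_{m_i}=1$ forces $a_{m_i+1}=1$, one has $l-1\ge m_i+1$, so $q_m\ge q_{l-2}+1\ge q_{m_i+1}\ge q_{m_i}+q_{m_i-1}$ (the edge case $l-2=m_i$ being handled directly, again via $a_{m_i+1}=1$). As $q$ also increases strictly between consecutive exceptional indices, this yields $q_n\to\infty$; the cases of finitely many, or no, exceptional indices are immediate.

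For the series estimate I would then split the index set at the exceptional indices. On each increasing run $k=m_i+1,\dots,m_{i+1}-1$ we have $q_k-q_{k-1}\ge 1$, hence $\frac{1}{q_{k-1}q_k}\le\frac{1}{q_{k-1}}-\frac{1}{q_k}$, so by telescoping the run contributes at most $1/q_{m_i}$; each exceptional term $\frac{1}{q_{m_i-1}q_{m_i}}$ is at most $1/q_{m_i-1}<1/q_{m_i}$; and the initial block $k<m_1$ contributes at most $1/q_0=1$. Thus $\sum_k\frac{1}{q_{k-1}q_k}\le 1+2\sum_i 1/q_{m_i}$, which is finite because $q_{m_i}\ge 2^{\,i-1}q_{m_1}\ge 2^{\,i-1}$. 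Hence $p_n/q_n$ converges, completing the proof.

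The hard part will be the inductive bookkeeping in the second paragraph — the extraction of the geometric lower bound $q_{m_{i+1}}\ge q_{m_i}+q_{m_i-1}$ from the recursion, which forces one to peel off runs of $(2,-1)$-steps and to exploit the constraint $a_{m_i+1}=1$ at each exceptional index. If one only needed $q_n\to\infty$, a cruder argument pairing each exceptional index with the following (increasing) step would suffice; it is the growth rate of the dips that makes the series in (\ref{Int8}) converge. One could instead deduce both statements by converting the SRCF to a regular continued fraction, infinite by virtue of (\ref{Cond1}), but that uses machinery developed only later in the paper.
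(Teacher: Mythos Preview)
The paper does not actually prove this proposition: immediately after stating it, the authors write that ``the proof of Tietze Theorem in the general case uses continuants (\cite{Per}, \cite{Offer}). It is rather long and we don't reproduce it here,'' adding only that when $q_n$ is increasing the convergence follows at once from (\ref{Int8}). So there is no argument in the paper to compare your proposal with term by term; one can only compare your strategy to the classical continuant approach the paper cites.

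Your argument is essentially correct and is genuinely different from that approach. Instead of manipulating continuant polynomials, you work directly with the recursion (\ref{Int5}) and Proposition~\ref{Lem1}, isolating the ``exceptional'' indices where $b_n=1,\,a_n=-1$ and showing that the dips $q_{m_i}$ grow at least geometrically. The key step---peeling off a maximal block of $(2,-1)$ steps preceding an exceptional index and reducing $q_{m_{i+1}}=q_{l-1}-q_{l-2}$ to an index $l-1$ with $b_{l-1}\ge2$ not of type $(2,-1)$---is sound; the inequality $q_{l-1}-2q_{l-2}\ge1$ indeed follows from Proposition~\ref{Lem1} (the second relation when $b_{l-1}\ge3$, the first relation $q_{l-3}\ge1$ when $b_{l-1}=2,\,a_{l-1}=1$), and the bound $l-2\ge m_i$ together with the edge case $l-2=m_i$ is handled correctly via $a_{m_i+1}=1$, $b_{m_i+1}\ge2$. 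The telescoping estimate for $\sum 1/(q_{k-1}q_k)$ then closes the convergence part through (\ref{Int8}), and the cases of finitely many or no exceptional indices are indeed immediate by the same telescoping. Compared to the continuant route, your proof is more elementary and self-contained (it needs nothing beyond Proposition~\ref{Lem1} and (\ref{Int8})), at the cost of somewhat fiddly case bookkeeping.

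Two small points to tidy up. First, the chain $q_{l-2}+1\ge q_{m_i+1}$ should really read $q_{l-2}\ge q_{m_i+1}$ (valid since $l-2\ge m_i+1$ and $q$ increases on that run), from which $q_m\ge q_{m_i+1}+1$ follows; as written it is slightly off, though the conclusion is unaffected. Second, the boundary case $m_1=1$ (allowed, since (\ref{Int2}) imposes no constraint on $b_0+a_1$) gives $q_{m_1}=q_{m_1-1}=1$, so the strict inequality $q_{m_{i+1}}>2q_{m_i}$ can degenerate to equality at the first step; this is harmless for the geometric bound $q_{m_i}\ge 2^{\,i-1}$ you actually use, but you should phrase the claim with $\ge$ rather than $>$.
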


The proof of Tietze Theorem in the general case uses continuants (\cite{Per},
\cite{Offer}). It is rather long and we don't reproduce it here. However, when
the sequence $q_{n}$ is increasing, the convergence of $p_{n}/q_{n}$ follows
immediately from (\ref{Int8}).

\begin{proposition}
\label{Lem-x-n,k}For $n\geq0,$ let $x_{n}$ be defined by (\ref{xn2}). Then for
$n\geq0$%
\[
0<x_{n}\leq1\text{ if }a_{n+1}=1,\qquad-1\leq x_{n}<0\text{ if }a_{n+1}=-1.
\]

\end{proposition}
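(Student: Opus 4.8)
The plan is to prove the two inequalities together by induction on $n$, working with the tails $x_n$ directly. Recall from \eqref{xn2} that $x_n = a_{n+1}/(b_{n+1} + x_{n+1})$, so the sign of $x_n$ is governed by $a_{n+1}$ provided the denominator $b_{n+1} + x_{n+1}$ is positive. Thus the natural statement to induct on is: for every $n \geq 0$, one has $0 < x_n \leq 1$ when $a_{n+1} = 1$, and $-1 \leq x_n < 0$ when $a_{n+1} = -1$. A clean way to organize this is to observe that in either case the bound can be summarized as $b_{n+1} + x_{n+1} \geq b_{n+1} - 1 \geq 0$, with strict positivity coming from the SRCF conditions \eqref{Int2}; I would set up the induction so that the inductive hypothesis at stage $n+1$ feeds exactly this denominator estimate.

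First I would address convergence, since $x_n$ is itself an infinite SRCF of the same type (its partial numerators and denominators are $a_{n+k}, b_{n+k}$ for $k \geq 1$, which still satisfy \eqref{Int2} and \eqref{Cond1}), so by Tietze's Theorem (Proposition \ref{ThDef}) the value $x_n$ is well defined. Next, for the inductive step, suppose the claimed bounds hold for $x_{n+1}$. Then $x_{n+1} \in [-1, 1]$, so $b_{n+1} + x_{n+1} \geq b_{n+1} - 1 \geq 0$. If $a_{n+1} = -1$, then by \eqref{Int2} we must have $b_{n+1} + a_{n+2} \geq 1$; I would split into the cases $a_{n+2} = 1$ and $a_{n+2} = -1$ to show the denominator $b_{n+1} + x_{n+1}$ is in fact $\geq 1$ (when $a_{n+2}=1$, $x_{n+1} > 0$ gives $b_{n+1} + x_{n+1} > b_{n+1} \geq 1$ using $b_{n+1} \geq 1$; when $a_{n+2} = -1$, $b_{n+1} \geq 2$ and $x_{n+1} \geq -1$ give $b_{n+1} + x_{n+1} \geq 1$). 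Hence $x_n = -1/(b_{n+1} + x_{n+1})$ lies in $[-1, 0)$, with $x_n \neq 0$ because the denominator is finite. Symmetrically, if $a_{n+1} = 1$, the same case analysis on $a_{n+2}$ shows $b_{n+1} + x_{n+1} \geq 1$, whence $x_n = 1/(b_{n+1} + x_{n+1}) \in (0, 1]$.

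For the base of the induction I would treat $x_n$ for the largest relevant index, or rather run the argument as a statement about all $n$ simultaneously: the bound $x_{n+1} \in [-1,1]$ used above is exactly the inductive hypothesis, so I instead verify directly that every tail satisfies $|x_n| \le 1$ by a separate first induction (or by noting $x_n = a_{n+1}/(b_{n+1}+x_{n+1})$ with $|x_{n+1}| \le 1$ and $b_{n+1} \ge 1$ forces $|b_{n+1}+x_{n+1}| \ge \max(b_{n+1}-1, \text{something})$ — this needs the denominator bounded below by $1$, which is precisely the content of Proposition \ref{Lem1} applied to the shifted continued fraction, giving $q_k + a_{k+1} q_{k-1} \geq 1$). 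The main obstacle is the delicate point that the denominator $b_{n+1} + x_{n+1}$ could a priori be as small as $0$ when $b_{n+1} = 1$ and $x_{n+1} = -1$; ruling this out is exactly where condition \eqref{Int2} (the inequality $b_n + a_{n+1} \geq 1$) is essential, and the cleanest route is to transfer the estimate $q_k + a_{k+1} q_{k-1} \geq 1$ from Proposition \ref{Lem1} — applied to the tail continued fraction — into the statement that the denominators $b_{n+1} + x_{n+1}$ stay bounded away from $0$, so that $x_n$ is finite and the sign/size bounds follow.
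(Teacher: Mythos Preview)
Your case analysis for the inductive step is correct and essentially the same as the paper's, but the induction scheme you propose is not well-founded. You want to pass from bounds on $x_{n+1}$ to bounds on $x_n$ via $x_n = a_{n+1}/(b_{n+1}+x_{n+1})$; since the tails $x_n$ are defined for every $n\ge 0$ and the recursion runs in the direction of \emph{decreasing} $n$, there is no base case. You notice this difficulty, but the two patches you offer are both circular: a ``separate first induction'' establishing $|x_n|\le 1$ has the same missing base, and the inequality $q_k + a_{k+1}q_{k-1}\ge 1$ from Proposition~\ref{Lem1} concerns the denominators of convergents, not the quantity $b_{n+1}+x_{n+1}$; it does not by itself give the denominator bound you need without already knowing $x_{n+1}\in[-1,1]$.

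The paper avoids this by inducting on the depth $k$ of the \emph{finite truncations}
\[
x_{n,k}=\frac{a_{n+1}}{b_{n+1}}\genfrac{}{}{0pt}{}{{}}{+}\cdots\genfrac{}{}{0pt}{}{{}}{+}\frac{a_{n+k}}{b_{n+k}},
\]
proving for all $n$ simultaneously that $0<x_{n,k}\le 1$ when $a_{n+1}=1$ and $-1\le x_{n,k}<0$ when $a_{n+1}=-1$. Now the base case $k=1$ is immediate ($x_{n,1}=a_{n+1}/b_{n+1}$), and the inductive step uses $x_{n,k+1}=a_{n+1}/(b_{n+1}+x_{n+1,k})$ with exactly your split on $b_{n+1}$ and $a_{n+2}$. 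Letting $k\to\infty$ then yields the claim for $x_n$. Your argument becomes a valid proof once you replace $x_{n+1}$ by $x_{n+1,k}$ throughout and organize the induction on $k$ rather than on $n$.
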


\begin{proof}
Define for $n\geq1$ and $k\geq0$%
\begin{equation}
x_{n,k}:=\frac{a_{n+1}}{b_{n+1}}%
\genfrac{}{}{0pt}{}{{}}{+}%
\frac{a_{n+2}}{b_{n+2}}%
\genfrac{}{}{0pt}{}{{}}{+\cdots+}%
\frac{a_{n+k}}{b_{n+k}}. \label{x-n-k-0}%
\end{equation}
As $x_{n}=\lim_{k\rightarrow\infty}x_{n,k}$ by Proposition \ref{ThDef}, we
have to prove that for $n\geq0$ and $k\geq1,$%
\begin{equation}
0<x_{n,k}\leq1\text{ if }a_{n+1}=1,\qquad-1\leq x_{n,k}<0\text{ if }%
a_{n+1}=-1. \label{x-n-k}%
\end{equation}
The proof is by induction on $k.$ If $k=1,$ then $x_{n,1}=a_{n+1}/b_{n+1}$ and
so (\ref{Int2}) implies (\ref{x-n-k}) for $k=1$ and any $n\geq0.$ Assume that
(\ref{x-n-k}) holds for some $k\geq1$ and any $n\geq0.$ Then by (\ref{x-n-k-0}%
) we have%
\begin{equation}
x_{n,k+1}=\frac{a_{n+1}}{b_{n+1}+x_{n+1,k}}, \label{x-n-k-1}%
\end{equation}
where for any $n\geq0$%
\begin{equation}
0<x_{n+1,k}\leq1\text{ if }a_{n+2}=1,\qquad-1\leq x_{n+1,k}<0\text{ if
}a_{n+2}=-1. \label{x-n-k-2}%
\end{equation}
If $b_{n+1}\geq2,$ then by (\ref{x-n-k-2})%
\begin{equation}
b_{n+1}+x_{n+1,k}\geq1. \label{x-n-k-3}%
\end{equation}
Otherwise, we have $b_{n+1}=1$ and therefore $a_{n+2}=1$ by (\ref{Int2}), and
so we find again (\ref{x-n-k-3}) by the first inequality in (\ref{x-n-k-2}).
Hence (\ref{x-n-k-1}) and (\ref{x-n-k-3}) imply%
\[
0<x_{n,k+1}\leq1\text{ if }a_{n+1}=1,\qquad-1\leq x_{n,k+1}<0\text{ if
}a_{n+1}=-1,
\]
which proves (\ref{x-n-k}) by induction, and Proposition \ref{Lem-x-n,k} follows.
\end{proof}

\begin{proposition}
\label{ThIrrat}The value of the infinite SRCF $\alpha$ defined by (\ref{Int1})
with (\ref{Int2}) and (\ref{Cond1}) is irrational.
\end{proposition}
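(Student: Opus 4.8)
The plan is to show that $\alpha$ cannot equal a rational $p/q$ by exploiting the tail expansions $x_n$ from \eqref{xn2} together with Proposition \ref{Lem-x-n,k}. First I would recall the standard identity $\alpha = \dfrac{p_n + x_n p_{n-1}}{q_n + x_n q_{n-1}}$, valid for all $n \geq 0$, which follows by a routine induction from \eqref{Int5} and the recursive relation $x_{n-1} = a_n/(b_n + x_n)$. Combining this with \eqref{Int7}, one gets the exact error formula
\[
\alpha - \frac{p_n}{q_n} = \frac{(-1)^{n} a_1 a_2 \cdots a_{n+1}\, x_n / a_{n+1}}{q_n(q_n + x_n q_{n-1})} \cdot (\text{sign bookkeeping}),
\]
more precisely $\left|\alpha - \dfrac{p_n}{q_n}\right| = \dfrac{|x_n|}{q_n\,(q_n + x_n q_{n-1})}$, where by Proposition \ref{Lem1} the denominator $q_n + x_n q_{n-1} = q_n + a_{n+1}(\cdots)q_{n-1}$ is positive (indeed $\geq q_n + a_{n+1}q_{n-1}\cdot 1 \geq 1$ when $|x_n|\le 1$, using $-1 \le x_n < 0$ in the subtractive case).

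Next I would assume for contradiction that $\alpha = p/q$ with $q \geq 1$. Then from the error formula,
\[
\left| \frac{p}{q} - \frac{p_n}{q_n} \right| = \frac{|x_n|}{q_n(q_n + x_n q_{n-1})},
\]
and the left side, being a nonzero rational with denominator dividing $q q_n$ (it is nonzero because $p_n/q_n \to \alpha$ but the convergents are never eventually constant, as $q_n \to \infty$ by Tietze's Theorem, Proposition \ref{ThDef}), is at least $\dfrac{1}{q q_n}$ whenever $p/q \neq p_n/q_n$. This forces $|x_n| \geq \dfrac{q_n + x_n q_{n-1}}{q} \geq \dfrac{1}{q}\,(q_n - q_{n-1})$ or, more carefully, $q|x_n| \geq q_n + x_n q_{n-1} \geq 1$ in the cases where $q_n$ is increasing; the point is to derive a lower bound on $|x_n|$ that does not go to $0$. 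But $|x_n| \leq 1$ always, and I claim $|x_n| < 1$ for infinitely many $n$ — this is exactly where condition \eqref{Cond1} enters. Indeed, from \eqref{x-n-k-1}, $|x_n| = 1/(b_{n+1} + x_{n+1})$, and if $b_{n+1} + a_{n+2} \geq 2$ then $b_{n+1} + x_{n+1} \geq b_{n+1} + a_{n+2}\cdot(\text{something} \le 1) $; one checks $b_{n+1} + x_{n+1} > 1$ strictly, hence $|x_n| < 1$, and in fact one gets a uniform-in-the-relevant-subsequence gap. Combined with $q_n \to \infty$, this contradicts the lower bound forcing $q|x_n|$ to stay $\geq$ something proportional to $q_n$.

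The main obstacle will be handling the subtractive steps ($a_{n+1} = -1$) carefully: there $x_n < 0$, so $q_n + x_n q_{n-1}$ is a difference and the telescoping signs in \eqref{Int8} alternate in a more delicate way, so I must be sure the denominator stays bounded below and that $\alpha - p_n/q_n$ is genuinely nonzero at the indices I use. The cleanest route is probably to choose the subsequence of indices $n$ where $b_{n+1} + a_{n+2} \geq 2$ (guaranteed infinitely often by \eqref{Cond1}), show that along this subsequence $\alpha$ lies strictly between $p_n/q_n$ and $p_{n+1}/q_{n+1}$ with $|\alpha - p_n/q_n| < 1/(q_n q_{n+1}) \cdot C$ for a constant independent of $n$ actually $|\alpha - p_n/q_n| \le 1/q_n^2$ suffices once $|x_n|<1$ and $q_{n-1} < q_n$ — and then invoke the classical fact that a rational number admits only finitely many approximations $p'/q'$ with $|p/q - p'/q'| < 1/(q q')$, contradiction since the $q_n$ along the subsequence are distinct and unbounded. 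I would close by noting that the excluded configuration in \eqref{Cond1} — a tail NCF with $b_k = 2$ eventually — is precisely the case where all $|x_n| = 1$ in the limit, explaining why that hypothesis cannot be dropped.
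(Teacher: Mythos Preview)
Your framework---the tail identity $\alpha=(p_n+x_np_{n-1})/(q_n+x_nq_{n-1})$, the resulting error formula $|\alpha-p_n/q_n|=|x_n|/\bigl(q_n(q_n+x_nq_{n-1})\bigr)$, and a contradiction with a putative rational value---matches the paper's exactly. The gap is in how you force the contradiction in the subtractive case $x_n<0$. Two of your claims there are false. First, the asserted ``uniform-in-the-relevant-subsequence gap'' $|x_n|\le1-\epsilon$ along indices with $b_{n+1}+a_{n+2}\ge2$ does not hold: if $a_{n+1}=-1$, $a_{n+2}=1$, $b_{n+1}=1$, and $b_{n+2}$ is large, then $|x_n|=1/(1+x_{n+1})$ with $x_{n+1}\approx0$, so $|x_n|\to1$ along such a subsequence. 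Second, the claim that ``$|\alpha-p_n/q_n|\le1/q_n^2$ suffices once $|x_n|<1$ and $q_{n-1}<q_n$'' is wrong when $x_n<0$: you only get $q_n+x_nq_{n-1}>q_n-q_{n-1}$, and nothing prevents $q_n-q_{n-1}$ from staying bounded while $q_n\to\infty$ (think of a near-degenerate NCF with long runs of $b_k=2$). So neither route you sketch actually shows $q_n+x_nq_{n-1}\to\infty$, which is what the contradiction $q_n+x_nq_{n-1}\le q|x_n|\le q$ requires.

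The paper closes this by a clean dichotomy rather than by bounding $|x_n|$. If $a_n=1$ infinitely often, pick $n$ with $a_{n+2}=1$; then $x_{n+1}>0$ and $\xi_{n+1}:=q_{n+1}+x_{n+1}q_n\ge q_{n+1}\to\infty$, done. Otherwise $a_n=-1$ for all large $n$, the tail is a negative continued fraction, and condition~\eqref{Cond1} forces $b_{n+1}\ge3$ infinitely often; at those indices $q_{n+1}=b_{n+1}q_n-q_{n-1}>2q_n$ (using $q_{n-1}<q_n$ from Proposition~\ref{Lem1}), whence $\xi_{n+1}>q_{n+1}-q_n>\tfrac12q_{n+1}\to\infty$. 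In both cases $|\alpha-p_n/q_n|=1/(q_n\xi_{n+1})\le2/(q_nq_{n+1})$ for infinitely many $n$, which immediately contradicts $\alpha=c/d$. Replacing your $|x_n|$-bound attempt with this case split repairs the argument.
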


\begin{proof}
Let $x_{n}$ be defined in (\ref{xn2}). It is well known, and easy to check by
induction, that for all $n\geq-1$%
\[
\alpha=\frac{p_{n+1}+x_{n+1}p_{n}}{q_{n+1}+x_{n+1}q_{n}}.
\]
This yields immediately%
\begin{equation}
\left\vert \alpha-\frac{p_{n}}{q_{n}}\right\vert =\frac{1}{q_{n}\left\vert
\xi_{n+1}\right\vert }, \label{Ksi2}%
\end{equation}
where $\xi_{n+1}:=q_{n+1}+x_{n+1}q_{n}$ $\left(  n\geq0\right)  .$ Assume
first that there exist infinitely many $n$ such that $a_{n}=1.$ Then by
Proposition \ref{Lem-x-n,k} there are infinitely many $n$ such that
$x_{n+1}>0$ and therefore by (\ref{Ksi2})%
\begin{equation}
0<\left\vert \alpha-\frac{p_{n}}{q_{n}}\right\vert \leq\frac{1}{q_{n}q_{n+1}%
}\leq\frac{2}{q_{n}q_{n+1}} \label{Ksi3}%
\end{equation}
for infinitely many $n.$ Now if $a_{n}=-1$ for all large $n,$ there is no loss
of generality in assuming that $a_{n}=-1$ for all $n\geq1.$ In this case
$x_{n}$ is a negative continued fraction and there exist infinitely many
$n\geq N$ such that $b_{n+1}\geq3,$ in which case%
\[
q_{n+1}=b_{n+1}q_{n}-q_{n-1}\geq2q_{n}+q_{n}-q_{n-1}>2q_{n},
\]
since $q_{n}>q_{n-1}$ by Proposition \ref{Lem1}. As $-1<x_{n+1}\leq0$ by
Proposition \ref{Lem-x-n,k}, we get
\[
\xi_{n+1}=q_{n+1}+x_{n+1}q_{n}>q_{n+1}-q_{n}\geq\frac{1}{2}q_{n+1}%
\]
and therefore (\ref{Ksi3}) holds again for infinitely many $n.$ Now, assume
that $\alpha$ is a rational number, $\alpha=c/d$ with $d>0.$ Then by
(\ref{Ksi3}) there exist infinitely many $n$ such that $1\leq\left\vert
cq_{n}-dp_{n}\right\vert \leq2d/q_{n+1}$ since $cq_{n}-dp_{n}$ is a non-zero
integer. This yields a contradiction for $n$ large because $\lim
_{n\rightarrow\infty}q_{n+1}=+\infty.$
\end{proof}

\begin{remark}
\label{RemIrrat}The continued fraction%
\begin{equation}
\omega:=\frac{1}{2}%
\genfrac{}{}{0pt}{}{{}}{-}%
\frac{1}{2}%
\genfrac{}{}{0pt}{}{{}}{-\cdots-}%
\frac{1}{2}%
\genfrac{}{}{0pt}{}{{}}{-\cdots}
\label{Omega}%
\end{equation}
is convergent by Proposition \ref{ThDef}, and clearly $\omega=1/(2-\omega),$
whence $\omega=1.$ This explains the condition (\ref{Cond1}) in the definition
of SRCF.
\end{remark}

\section{Negative continued fractions}

\label{SecNCF}In this section, we study in more detail negative continued
fractions (NCF), in order to show that formula (\ref{Form}) may not hold in
this case. We will use the following notation:%
\[
b_{0}-\frac{1}{b_{1}}%
\genfrac{}{}{0pt}{}{{}}{-}%
\frac{1}{b_{2}}%
\genfrac{}{}{0pt}{}{{}}{-\cdots-}%
\frac{1}{b_{n}}=\left[  b_{0};b_{1},b_{2},\ldots,b_{n}\right]  ^{-},
\]
which is similar to that for regular continued fractions (RCF):%
\[
b_{0}+\frac{1}{b_{1}}%
\genfrac{}{}{0pt}{}{{}}{+}%
\frac{1}{b_{2}}%
\genfrac{}{}{0pt}{}{{}}{+\cdots+}%
\frac{1}{b_{n}}=\left[  b_{0};b_{1},b_{2},\ldots,b_{n}\right]  .
\]
As already observed, NCF are of special importance among SRCF. They present
two characteristics:\medskip

(a) $b_{n}\geq3$\textit{\ }infinitely often, in order for NCF to be irrational
(Remark \ref{RemIrrat}).\medskip\ 

(b) For all $n\geq0,$ the convergent $p_{n}/q_{n}$ is an approximation by
excess of the infinite NCF $\alpha$ defined in (\ref{Int1}). This results
immediately from (\ref{Int8}).\medskip

A constructive process, which goes back to Lagrange \cite{Lag}, allows to
transform any NCF into a RCF by using the following two elementary formulas,
which are easy to prove. The first one is%
\begin{equation}
x-\frac{1}{y}=x-1+\frac{1}{1}%
\genfrac{}{}{0pt}{}{{}}{+}%
\frac{1}{y-1}. \label{F1}%
\end{equation}
The second one is%
\begin{equation}
x+\frac{1}{0}%
\genfrac{}{}{0pt}{}{{}}{+}%
\frac{1}{y}=x+y. \label{F2}%
\end{equation}
Assume first that $b_{0},b_{1},\ldots,b_{n}$ are indeterminates, and let%
\[
T_{n}:=\left[  b_{0};b_{1},b_{2},\ldots,b_{n}\right]  ^{-}.
\]
By using (\ref{F1}), we get easily%
\begin{equation}
T_{n}=\left[  b_{0}-1;\underline{1,b_{1}-2},\underline{1,b_{2}-2}%
,\ldots,\underline{1,b_{n}-2},1\right]  . \label{Trans1}%
\end{equation}
Now assume that $b_{0}$ is a rational integer and that $b_{1},b_{2}%
,\ldots,b_{n},\ldots$ are rational integers with $b_{n}\geq2$ for all $n\geq1$
and $b_{n}\geq3$ for infinitely many $n\geq1.$ Let $n_{k}$ be defined by
$n_{0}=0$ and for all $k\geq1$%
\begin{equation}
b_{n_{k}}\geq3\quad\text{and}\quad b_{n}=2\text{ if }n_{k-1}<n<n_{k}.
\label{n_k}%
\end{equation}
Then (\ref{Trans1}) yields by using (\ref{F2})%
\begin{equation}
T_{n_{k}}=\left[  b_{0}-1;\underline{n_{1}-n_{0},b_{n_{1}}-2},\underline{n_{2}%
-n_{1},b_{n_{2}}-2},\ldots,\underline{n_{k}-n_{k-1},b_{n_{k}}-2},1\right]  .
\label{Trans2}%
\end{equation}
Consider the infinite RCF%
\begin{align*}
\beta &  =\left[  b_{0}-1;\underline{n_{1}-n_{0},b_{n_{1}}-2},\underline{n_{2}%
-n_{1},b_{n_{2}}-2},\ldots,\underline{n_{k}-n_{k-1},b_{n_{k}}-2},\ldots\right]
\\
&  =\left[  c_{0};c_{1},c_{2},c_{3},c_{4},\ldots,c_{2k-1},c_{2k}%
,\ldots\right]  ,
\end{align*}
where $c_{0}=b_{0}-1$ and for $k\geq1$%
\begin{equation}
c_{2k-1}=n_{k}-n_{k-1},\quad c_{2k}=b_{n_{k}}-2 \label{ck}%
\end{equation}
with $n_{k}$ defined in (\ref{n_k}). We observe by (\ref{Trans2}) that%
\[
T_{n_{k}}=\left[  c_{0};c_{1},c_{2},c_{3},c_{4},\ldots,c_{2k-1},c_{2k}%
,1\right]  .
\]
Let $P_{n}/Q_{n}$ be the \textit{n}-th convergent of $\beta.$ Then%
\[
\left\vert T_{n_{k}}-\frac{P_{2k}}{Q_{2k}}\right\vert =\left\vert \frac
{P_{2k}+P_{2k-1}}{Q_{2k}+Q_{2k-1}}-\frac{P_{2k}}{Q_{2k}}\right\vert =\frac
{1}{Q_{2k}\left(  Q_{2k}+Q_{2k-1}\right)  }.
\]
Hence $\lim_{k\rightarrow\infty}\left(  T_{n_{k}}-P_{2k}/Q_{2k}\right)  =0,$
which proves that%
\begin{equation}
\left[  b_{0};b_{1},b_{2},\ldots,b_{n},\ldots\right]  ^{-}=\left[  c_{0}%
;c_{1},c_{2},\ldots,c_{n},\ldots\right]  , \label{Trans10}%
\end{equation}
where the $c_{n}$ are defined by $c_{0}=b_{0}-1$ and (\ref{ck}). This formula
allows to transform any infinite NCF into an infinite RCF.

\begin{remark}
\label{RemIto}Formula (\ref{Trans10}) is equivalent to Formula (6) in
\cite{Ito} (see also \cite[Proposition 2]{Daj}), which allows to transform any
RCF into a NCF:%
\[
\left[  0;a_{1},a_{2},\ldots\right]  =[1;\underset{\left(  a_{1}-1\right)
\text{ \textit{times}}}{\underline{2,\ldots,2}},a_{2}+2,\underset{\left(
a_{3}-1\right)  \text{ \textit{times}}}{\underline{2,\ldots,2}},a_{4}%
+2,\ldots]^{-}.
\]

\end{remark}

\begin{remark}
\label{Transf}More generally, any SRCF can be transformed into a regular one
by using (\ref{F1}) and (\ref{F2}), as shown in \cite[p. 159]{Per}. The
converse is also true \cite{Daj}.
\end{remark}

Now we can show that (\ref{Form}) doesn't hold in general for NCF. For this,
we consider an arbitrary increasing sequence $n_{k}$ with $n_{0}=0$ and define
$b_{n}$ by%
\begin{equation}
b_{n_{k}}=3,\qquad b_{n}=2\text{\quad if\quad}n_{k-1}<n<n_{k}. \label{bn}%
\end{equation}
Let $\alpha$ be the infinite NCF defined by (\ref{bn}). Assume that
(\ref{Form}) is true for every NCF. Then $\mu\left(  \alpha\right)  =2$ since
$b_{n}$ is bounded. However we can show that, for every $s\geq2,$ it is
possible to construct a sequence $n_{k}$ such that $\mu\left(  \alpha\right)
=s.$ Indeed, we know that the expansion in RCF of $\alpha=\lim P_{n}/Q_{n}$ is
given by (\ref{Trans10}), and therefore by (\ref{Form}) we see that%
\begin{equation}
\mu\left(  \alpha\right)  =2+\limsup_{n\rightarrow\infty}\frac{\log c_{n+1}%
}{\log Q_{n}}=2+\limsup_{n\rightarrow\infty}\frac{\log c_{2k+1}}{\log Q_{2k}},
\label{mualpha}%
\end{equation}
since $c_{2k}=1.$ But $Q_{2k}$ depends only on $c_{0},c_{1},\ldots,c_{2k}$ and
therefore only on $n_{1},n_{2},\ldots,n_{k}$ by (\ref{ck}). Hence it is
possible to construct by induction the sequence $(n_{k})$ in such a way that%
\begin{equation}
c_{2k+1}=n_{k+1}-n_{k}=\left\lfloor Q_{2k}^{s-2}\right\rfloor . \label{Rec}%
\end{equation}
As $c_{2k+1}\leq Q_{2k}^{s-2}<c_{2k+1}+1,$ by (\ref{mualpha}) we get
$\mu\left(  \alpha\right)  =s.$ This contradiction proves that (\ref{Form}) is
not valid in general for NCF. As a consequence, we have

\begin{theorem}
\label{ThHancl}For any $s\in\left\{  1\right\}  \cup\left[  2,+\infty\right[
,$ there exists a NCF
\[
\alpha=-\frac{1}{b_{1}}%
\genfrac{}{}{0pt}{}{{}}{-}%
\frac{1}{b_{2}}%
\genfrac{}{}{0pt}{}{{}}{-\cdots-}%
\frac{1}{b_{n}}%
\genfrac{}{}{0pt}{}{{}}{+\cdots}%
\]
with $b_{n}\in\left\{  2,3\right\}  $ for all $n\geq1$ such that $\mu\left(
\alpha\right)  =s.$
\end{theorem}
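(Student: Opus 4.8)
The plan is to realize the desired value $s$ of the irrationality exponent by choosing the increasing sequence $(n_k)$ appropriately in the construction already set up in the text, following the discussion that immediately precedes the statement. Concretely, I would first dispose of the trivial case $s=1$: here one simply takes $n_k=k$ for all $k$, so that $b_n=3$ for all $n\geq1$. Then $\alpha=[0;1,1,1,\dots]^-$ becomes, via (\ref{Trans10}) with all $c_{2k-1}=1$ and $c_{2k}=1$, the golden-ratio type RCF $[\,c_0;1,1,1,\dots]$, which is a quadratic irrational; by Roth's theorem (or directly by (\ref{Form})) $\mu(\alpha)=2$, but one checks that with the shift $s\mapsto 1$ this is exactly the degenerate reading $\mu=1$ is never attained for irrationals — so in fact I should instead note that $s=1$ is included only because the formula (\ref{Form}) is being tested, and the honest statement is that for $s=1$ one takes a \emph{rational} example or simply reinterprets; more carefully, the case $s=1$ corresponds to taking $\alpha$ rational, e.g. $\omega$ from Remark~\ref{RemIrrat} with $b_n=2$, which has no valid irrationality exponent, conventionally set to $1$. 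I would keep this case brief and route the real argument through $s\in[2,\infty)$.

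For $s\in[2,\infty)$, the construction is the recursive one displayed in (\ref{Rec}): set $n_0=0$ and, having chosen $n_1,\dots,n_k$ (equivalently $c_1,\dots,c_{2k}$, which by (\ref{ck}) determine $Q_{2k}$), define
\[
n_{k+1}=n_k+\bigl\lfloor Q_{2k}^{\,s-2}\bigr\rfloor+1,
\]
so that $c_{2k+1}=n_{k+1}-n_k=\lfloor Q_{2k}^{\,s-2}\rfloor+1\geq 1$, guaranteeing $(n_k)$ is genuinely increasing (this is why I add $1$; when $s=2$ this still gives $c_{2k+1}=2$, legitimately). Then I define $b_n$ by (\ref{bn}): $b_{n_k}=3$ and $b_n=2$ strictly between consecutive $n_k$'s, so indeed $b_n\in\{2,3\}$ for all $n\geq1$, and $b_n\geq3$ infinitely often, so by Remark~\ref{RemIrrat} the NCF $\alpha=[0;b_1,b_2,\dots]^-$ is a bona fide irrational SRCF. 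By (\ref{Trans10}) its RCF expansion is $[c_0;c_1,c_2,\dots]$ with $c_0=b_0-1=-1$ (or we may take $b_0=0$ and absorb the integer part), $c_{2k-1}=n_k-n_{k-1}$, $c_{2k}=b_{n_k}-2=1$.

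The value of $\mu(\alpha)$ is then computed from (\ref{mualpha}): since $c_{2k}=1$ for all $k$, the $\limsup$ over all indices reduces to the $\limsup$ over odd indices,
\[
\mu(\alpha)=2+\limsup_{k\to\infty}\frac{\log c_{2k+1}}{\log Q_{2k}}.
\]
From $c_{2k+1}=\lfloor Q_{2k}^{\,s-2}\rfloor+1$ we have $Q_{2k}^{\,s-2}\leq c_{2k+1}\leq Q_{2k}^{\,s-2}+1\leq 2Q_{2k}^{\,s-2}$, hence $\log c_{2k+1}/\log Q_{2k}\to s-2$ as $k\to\infty$ (using $Q_{2k}\to\infty$, which holds since $Q_n$ is the denominator sequence of an infinite RCF, cf. Tietze's theorem Proposition~\ref{ThDef}). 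Therefore $\mu(\alpha)=s$, as desired. For the end-exponent case $s=2$ one gets $c_{2k+1}=2$ for all $k$, so the $\limsup$ is $0$ and $\mu(\alpha)=2$ directly.

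The main obstacle — really the only subtle point — is the self-referential nature of the recursion: $Q_{2k}$ is allowed to depend only on $c_0,\dots,c_{2k}$, i.e.\ on $n_1,\dots,n_k$, so that $n_{k+1}$ is well-defined by the time we need it. This is exactly the observation recorded in the paragraph before the theorem ("$Q_{2k}$ depends only on $c_0,\dots,c_{2k}$ and therefore only on $n_1,\dots,n_k$"), so the induction goes through cleanly; I would state it explicitly as the inductive construction of the sequence $(n_k)$. A second minor check is that the resulting $b_n$ satisfy the SRCF conditions (\ref{Int2}): here $a_n=-1$, $b_n\in\{2,3\}$, so $b_n+a_{n+1}\in\{1,2\}\geq1$, and $b_n\geq3$ infinitely often gives (\ref{Cond1}); all automatic. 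Writing $\alpha$ with leading term as in the theorem statement just means choosing $b_0=0$, so $c_0=-1$, which affects none of the asymptotics.
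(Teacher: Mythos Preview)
Your construction for $s\in[2,\infty)$ is exactly the paper's: the recursive choice $c_{2k+1}=\lfloor Q_{2k}^{s-2}\rfloor$ from (\ref{Rec}) combined with (\ref{mualpha}) gives $\mu(\alpha)=s$, and your added $+1$ is harmless but unnecessary since $Q_{2k}\geq 1$ already forces $\lfloor Q_{2k}^{s-2}\rfloor\geq 1$ for all $s\geq 2$. Your handling of $s=1$ is muddled, though you do end up in the right place: the paper dispatches this case in a single line by citing $\omega$ from Remark~\ref{RemIrrat} (all $b_n=2$, so $\alpha$ is rational with $\mu(\alpha)=1$); your initial attempt with $b_n\equiv 3$ gives a quadratic irrational with $\mu=2$ and should simply be deleted, and note that no ``convention'' is needed --- the paper's definition of $\mu$ gives $\mu=1$ for rationals directly.
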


Indeed, the case $s=1$ is given by (\ref{Omega}). This result, as well as
Theorem \ref{ThSec4} in the next section and Example 4 in Section \ref{SecEx},
has to be compared with \cite[Theorem 3.4]{Hancl}.

\section{Lehner continued fractions}

\label{SecLCF}In this section we will prove a formula similar to
(\ref{mualpha}) by transforming LCF into RCF. Let $\alpha$ defined by
(\ref{Int1}) be any LCF. Then%
\begin{align*}
\left(  a_{n},b_{n}\right)   &  =\left(  1,1\right)  \quad\Rightarrow
\quad\left(  a_{n+1},b_{n+1}\right)  =\left(  1,1\right)  \text{ or }\left(
1,2\right)  ,\\
\left(  a_{n},b_{n}\right)   &  =\left(  1,2\right)  \quad\Rightarrow
\quad\left(  a_{n+1},b_{n+1}\right)  =\left(  -1,1\right)  \text{ or }\left(
-1,2\right)  ,\\
\left(  a_{n},b_{n}\right)   &  =\left(  -1,1\right)  \quad\Rightarrow
\quad\left(  a_{n+1},b_{n+1}\right)  =\left(  1,1\right)  \text{ or }\left(
1,2\right)  ,\\
\left(  a_{n},b_{n}\right)   &  =\left(  -1,2\right)  \quad\Rightarrow
\quad\left(  a_{n+1},b_{n+1}\right)  =\left(  -1,1\right)  \text{ or }\left(
-1,2\right)  .
\end{align*}
Moreover by (\ref{Cond1}) there exist infinitely many $n$ such that $b_{n}=1.$
So there exist sequences of integers $l_{n}\geq0$ and $m_{n}\geq0$ such that
$\alpha=1+\beta$ or $\alpha=2-\beta,$ where%
\begin{multline*}
\beta:=\underset{l_{0}\text{ \textit{times}}}{\underline{\frac{1}{1}%
\genfrac{}{}{0pt}{}{{}}{+\cdots+}%
\frac{1}{1}}}%
\genfrac{}{}{0pt}{}{{}}{+}%
\frac{1}{2}%
\genfrac{}{}{0pt}{}{{}}{+}%
\underset{m_{1}\text{ \textit{times}}}{\underline{\frac{-1}{2}%
\genfrac{}{}{0pt}{}{{}}{+\cdots+}%
\frac{-1}{2}}}%
\genfrac{}{}{0pt}{}{{}}{+}%
\frac{-1}{1}%
\genfrac{}{}{0pt}{}{{}}{+}%
\underset{l_{1}\text{ \textit{times}}}{\underline{\frac{1}{1}%
\genfrac{}{}{0pt}{}{{}}{+\cdots+}%
\frac{1}{1}}}\\%
\genfrac{}{}{0pt}{}{{}}{+}%
\frac{1}{2}%
\genfrac{}{}{0pt}{}{{}}{+}%
\underset{m_{2}\text{ \textit{times}}}{\underline{\frac{-1}{2}%
\genfrac{}{}{0pt}{}{{}}{+\cdots+}%
\frac{-1}{2}}%
\genfrac{}{}{0pt}{}{{}}{+}%
}\frac{-1}{1}%
\genfrac{}{}{0pt}{}{{}}{+}%
\underset{l_{2}\text{ \textit{times}}}{\underline{\frac{1}{1}%
\genfrac{}{}{0pt}{}{{}}{+\cdots+}%
\frac{1}{1}}}%
\genfrac{}{}{0pt}{}{{}}{+}%
\frac{1}{2}%
\genfrac{}{}{0pt}{}{{}}{+}%
\underset{m_{3}\text{ \textit{times}}}{\underline{\frac{-1}{2}%
\genfrac{}{}{0pt}{}{{}}{+\cdots+}%
\frac{-1}{2}}%
\genfrac{}{}{0pt}{}{{}}{+\cdots}%
}%
\end{multline*}
It is easily seen that%
\[
\frac{1}{y}%
\genfrac{}{}{0pt}{}{{}}{+}%
\frac{-1}{1}%
\genfrac{}{}{0pt}{}{{}}{+}%
\frac{1}{x}=\frac{1}{y-1}%
\genfrac{}{}{0pt}{}{{}}{+}%
\frac{1}{1+x}.
\]
Using this repeatedly we obtain by induction for all $m\geq0$%
\[
\frac{1}{2}%
\genfrac{}{}{0pt}{}{{}}{+}%
\underset{m\text{ \textit{times}}}{\underline{\frac{-1}{2}%
\genfrac{}{}{0pt}{}{{}}{+\cdots+}%
\frac{-1}{2}}}%
\genfrac{}{}{0pt}{}{{}}{+}%
\frac{-1}{1}%
\genfrac{}{}{0pt}{}{{}}{+}%
\frac{1}{x}=\frac{1}{1}%
\genfrac{}{}{0pt}{}{{}}{+}%
\frac{1}{m+1+x},
\]
and therefore for all $l,m\geq0$%
\begin{equation}
\underset{l\text{ \textit{times}}}{\underline{\frac{1}{1}%
\genfrac{}{}{0pt}{}{{}}{+\cdots+}%
\frac{1}{1}}}%
\genfrac{}{}{0pt}{}{{}}{+}%
\frac{1}{2}%
\genfrac{}{}{0pt}{}{{}}{+}%
\underset{m\text{ \textit{times}}}{\underline{\frac{-1}{2}%
\genfrac{}{}{0pt}{}{{}}{+\cdots+}%
\frac{-1}{2}}}%
\genfrac{}{}{0pt}{}{{}}{+}%
\frac{-1}{1}%
\genfrac{}{}{0pt}{}{{}}{+}%
\frac{1}{x}=\underset{l+1\text{ \textit{times}}}{\underline{\frac{1}{1}%
\genfrac{}{}{0pt}{}{{}}{+\cdots+}%
\frac{1}{1}}}%
\genfrac{}{}{0pt}{}{{}}{+}%
\frac{1}{m+1+x}. \label{Transf2}%
\end{equation}
Let us consider the convergent $T_{n_{k}}$ of $\beta$ defined by%
\begin{multline*}
T_{n_{k}}:=\underset{l_{0}\text{ \textit{times}}}{\underline{\frac{1}{1}%
\genfrac{}{}{0pt}{}{{}}{+\cdots+}%
\frac{1}{1}}}%
\genfrac{}{}{0pt}{}{{}}{+}%
\frac{1}{2}%
\genfrac{}{}{0pt}{}{{}}{+}%
\underset{m_{1}\text{ \textit{times}}}{\underline{\frac{-1}{2}%
\genfrac{}{}{0pt}{}{{}}{+\cdots+}%
\frac{-1}{2}}}%
\genfrac{}{}{0pt}{}{{}}{+}%
\frac{-1}{1}%
\genfrac{}{}{0pt}{}{{}}{+}%
\underset{l_{1}\text{ \textit{times}}}{\underline{\frac{1}{1}%
\genfrac{}{}{0pt}{}{{}}{+\cdots+}%
\frac{1}{1}}}%
\genfrac{}{}{0pt}{}{{}}{+\cdots}%
\\%
\genfrac{}{}{0pt}{}{{}}{\cdots+}%
\frac{-1}{1}\underset{l_{k-1}\text{ \textit{times}}}{%
\genfrac{}{}{0pt}{}{{}}{+}%
\underline{\frac{1}{1}%
\genfrac{}{}{0pt}{}{{}}{+\cdots+}%
\frac{1}{1}}}%
\genfrac{}{}{0pt}{}{{}}{+}%
\frac{1}{2}%
\genfrac{}{}{0pt}{}{{}}{+}%
\underset{m_{k}\text{ \textit{times}}}{\underline{\frac{-1}{2}%
\genfrac{}{}{0pt}{}{{}}{+\cdots+}%
\frac{-1}{2}}%
\genfrac{}{}{0pt}{}{{}}{+}%
}\frac{-1}{1}.
\end{multline*}
Applying (\ref{Transf2}) with $x=\infty$ to the end of $T_{n_{k}},$ we see
that%
\begin{multline*}
T_{n_{k}}=\underset{l_{0}\text{ \textit{times}}}{\underline{\frac{1}{1}%
\genfrac{}{}{0pt}{}{{}}{+\cdots+}%
\frac{1}{1}}}%
\genfrac{}{}{0pt}{}{{}}{+}%
\frac{1}{2}%
\genfrac{}{}{0pt}{}{{}}{+}%
\underset{m_{1}\text{ \textit{times}}}{\underline{\frac{-1}{2}%
\genfrac{}{}{0pt}{}{{}}{+\cdots+}%
\frac{-1}{2}}}%
\genfrac{}{}{0pt}{}{{}}{+}%
\frac{-1}{1}%
\genfrac{}{}{0pt}{}{{}}{+}%
\underset{l_{1}\text{ \textit{times}}}{\underline{\frac{1}{1}%
\genfrac{}{}{0pt}{}{{}}{+\cdots+}%
\frac{1}{1}}}%
\genfrac{}{}{0pt}{}{{}}{+\cdots}%
\\%
\genfrac{}{}{0pt}{}{{}}{+\cdots}%
\genfrac{}{}{0pt}{}{{}}{+}%
\underset{l_{k-2}\text{ \textit{times}}}{\underline{\frac{1}{1}%
\genfrac{}{}{0pt}{}{{}}{+\cdots+}%
\frac{1}{1}}}%
\genfrac{}{}{0pt}{}{{}}{+}%
\frac{1}{2}%
\genfrac{}{}{0pt}{}{{}}{+}%
\underset{m_{k-1}\text{ \textit{times}}}{\underline{\frac{-1}{2}%
\genfrac{}{}{0pt}{}{{}}{+\cdots+}%
\frac{-1}{2}}%
\genfrac{}{}{0pt}{}{{}}{+}%
}\frac{-1}{1}%
\genfrac{}{}{0pt}{}{{}}{+}%
\underset{l_{k-1}+1\text{ \textit{times}}}{\underline{\frac{1}{1}%
\genfrac{}{}{0pt}{}{{}}{+\cdots+}%
\frac{1}{1}}}.
\end{multline*}
Applying again (\ref{Transf2}), we get
\begin{multline*}
T_{n_{k}}=\underset{l_{0}\text{ \textit{times}}}{\underline{\frac{1}{1}%
\genfrac{}{}{0pt}{}{{}}{+\cdots+}%
\frac{1}{1}}}%
\genfrac{}{}{0pt}{}{{}}{+}%
\frac{1}{1}%
\genfrac{}{}{0pt}{}{{}}{+}%
\underset{m_{1}\text{ \textit{times}}}{\underline{\frac{1}{m_{1}+1}%
\genfrac{}{}{0pt}{}{{}}{+\cdots+}%
\frac{-1}{2}}}%
\genfrac{}{}{0pt}{}{{}}{+}%
\frac{-1}{1}%
\genfrac{}{}{0pt}{}{{}}{+}%
\underset{l_{1}\text{ \textit{times}}}{\underline{\frac{1}{1}%
\genfrac{}{}{0pt}{}{{}}{+\cdots+}%
\frac{1}{1}}}%
\genfrac{}{}{0pt}{}{{}}{+\cdots}%
\\%
\genfrac{}{}{0pt}{}{{}}{+\cdots}%
\genfrac{}{}{0pt}{}{{}}{+}%
\underset{l_{k-2}+1\text{ \textit{times}}}{\underline{\frac{1}{1}%
\genfrac{}{}{0pt}{}{{}}{+\cdots+}%
\frac{1}{1}}}%
\genfrac{}{}{0pt}{}{{}}{+}%
\frac{1}{m_{k-1}+2}\underset{l_{k-1}\text{ \textit{times}}}{%
\genfrac{}{}{0pt}{}{{}}{+}%
\underline{\frac{1}{1}%
\genfrac{}{}{0pt}{}{{}}{+\cdots+}%
\frac{1}{1}}},
\end{multline*}
and so finally%
\[
T_{n_{k}}=[0,\underset{l_{0}+1\text{ \textit{times}}}{\underline{1,\ldots,1}%
},m_{1}+2,\underset{l_{1}\text{ \textit{times}}}{\underline{1,\ldots,1}}%
,m_{2}+2,,\ldots,\underset{l_{k-2}\text{ \textit{times}}}{\underline{1,\ldots
,1}},m_{k-1}+2,\underset{l_{k-1}\text{ \textit{times}}}{\underline{1,\ldots
,1}}],
\]
which yields immediately the expansion in RCF%
\[
\beta=[0,\underset{l_{0}+1\text{ \textit{times}}}{\underline{1,\ldots,1}%
},m_{1}+2,\underset{l_{1}\text{ \textit{times}}}{\underline{1,\ldots,1}}%
,m_{2}+2,\ldots,\underset{l_{k-1}\text{ \textit{times}}}{\underline{1,\ldots
,1}},m_{k}+2,\ldots].
\]
Define $t_{k}=l_{0}+l_{1}+\cdots+l_{k-1}+k.$ Applying (\ref{Form}) we get%
\begin{equation}
\mu\left(  \alpha\right)  =\mu\left(  \beta\right)  =2+\limsup_{k\rightarrow
\infty}\frac{\log\left(  m_{k}+2\right)  }{\log Q_{t_{k}}}. \label{Yes1}%
\end{equation}
For example, let $l_{k}=0$ for all $k\geq0$ and%
\begin{equation}
\alpha:=1+\frac{1}{2}%
\genfrac{}{}{0pt}{}{{}}{+}%
\underset{m_{1}\text{ \textit{times}}}{\underline{\frac{-1}{2}%
\genfrac{}{}{0pt}{}{{}}{+\cdots+}%
\frac{-1}{2}}}%
\genfrac{}{}{0pt}{}{{}}{+}%
\frac{-1}{1}%
\genfrac{}{}{0pt}{}{{}}{+}%
\frac{1}{2}%
\genfrac{}{}{0pt}{}{{}}{+}%
\underset{m_{2}\text{ \textit{times}}}{\underline{\frac{-1}{2}%
\genfrac{}{}{0pt}{}{{}}{+\cdots+}%
\frac{-1}{2}}%
\genfrac{}{}{0pt}{}{{}}{+}%
}\frac{-1}{1}%
\genfrac{}{}{0pt}{}{{}}{+}%
\frac{1}{2}%
\genfrac{}{}{0pt}{}{{}}{+\cdots}
\label{ExAlpha1}%
\end{equation}
Then $\alpha=\left[  1,1,m_{1}+2,m_{2}+2,m_{3}+2,\ldots,m_{k}+2,\ldots\right]
.$ By (\ref{Yes1}) we have%
\begin{equation}
\mu\left(  \alpha\right)  =2+\limsup_{k\rightarrow\infty}\frac{\log\left(
m_{k}+2\right)  }{\log Q_{k}}. \label{Mu1}%
\end{equation}
Let $s>2.$ As in (\ref{Rec}), for every $k$ we can choose $m_{k}$ such that%
\[
m_{k}+2=\left\lfloor Q_{k}^{s-2}\right\rfloor ,
\]
and so by (\ref{Mu1}) we get $\mu\left(  \alpha\right)  =s>2.$ We have proved

\begin{theorem}
\label{ThSec4}Let $s>2$ and let $\alpha$ be defined by (\ref{ExAlpha1}). Then
there exists a sequence $m_{k}$ such that $\mu\left(  \alpha\right)  =s.$
\end{theorem}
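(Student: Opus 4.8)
Building on the computation that precedes the statement, the plan is to reduce the claim to formula (\ref{Mu1}). For $\alpha$ as in (\ref{ExAlpha1}) it has already been shown that $\alpha=[1,1,m_1+2,m_2+2,\ldots]$, so, writing $Q_k$ for the denominator of the $k$-th convergent of this RCF, (\ref{Mu1}) reads
\[
\mu(\alpha)=2+\limsup_{k\to\infty}\frac{\log(m_k+2)}{\log Q_k}.
\]
The structural point I would stress is that, by the recursion $Q_k=c_kQ_{k-1}+Q_{k-2}$ for RCF denominators, $Q_k$ depends only on $c_1,\ldots,c_k$, hence only on $m_1,\ldots,m_{k-1}$; it involves neither $m_k$ itself nor any later term. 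This \emph{causality} is exactly what makes a recursive choice of the $m_k$'s admissible, just as in the construction around (\ref{Rec}).

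Accordingly I would define $(m_k)_{k\ge1}$ inductively. Suppose $m_1,\ldots,m_{k-1}$ have been fixed; then $Q_k$ is a determined positive integer. If $\lfloor Q_k^{\,s-2}\rfloor\ge2$, set $m_k:=\lfloor Q_k^{\,s-2}\rfloor-2$, a nonnegative integer; otherwise set $m_k:=0$. Since $c_j=m_{j-1}+2\ge2$ for $j\ge2$ we have $Q_k\ge2Q_{k-1}$ for $k\ge2$, so $Q_k\to\infty$ and the exceptional case occurs for only finitely many $k$. The continued fraction so obtained is a genuine LCF of the shape (\ref{ExAlpha1}); it converges and is irrational by Propositions \ref{ThDef} and \ref{ThIrrat}, and (\ref{Mu1}) applies.

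To conclude, I would evaluate the limsup. For every large $k$ one has $m_k+2=\lfloor Q_k^{\,s-2}\rfloor$, whence $Q_k^{\,s-2}-1<m_k+2\le Q_k^{\,s-2}$ and therefore
\[
\frac{\log\!\bigl(Q_k^{\,s-2}-1\bigr)}{\log Q_k}<\frac{\log(m_k+2)}{\log Q_k}\le s-2 .
\]
Since $Q_k\to\infty$ and $s-2>0$, the left-hand side tends to $s-2$, so $\frac{\log(m_k+2)}{\log Q_k}\to s-2$; the finitely many early terms do not affect the limsup. Hence the limsup in (\ref{Mu1}) equals $s-2$, and $\mu(\alpha)=2+(s-2)=s$.

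The estimates above are entirely routine; the only point needing real care is the self-consistency of the recursion — defining $m_k$ through $Q_k$ is legitimate precisely because $Q_k$ does not see $m_k$ — together with the harmless bookkeeping that keeps each $m_k$ a nonnegative integer at the finitely many small indices where $\lfloor Q_k^{\,s-2}\rfloor-2$ could be negative. (One can sidestep the latter entirely by taking $m_k:=\lfloor Q_k^{\,s-2}\rfloor$ for every $k$, which is always $\ge1$; the same squeeze still yields $\frac{\log(m_k+2)}{\log Q_k}\to s-2$.)
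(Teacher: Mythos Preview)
Your argument is correct and follows exactly the same approach as the paper: use the RCF expansion $\alpha=[1,1,m_1+2,m_2+2,\ldots]$, invoke (\ref{Mu1}), and choose $m_k$ recursively so that $m_k+2=\lfloor Q_k^{\,s-2}\rfloor$ (the paper does precisely this, in the paragraph immediately preceding the theorem, referring back to (\ref{Rec})). You have simply been more explicit than the paper about the self-consistency of the recursion and the bookkeeping needed to keep $m_k\ge0$ for small $k$, which the paper leaves implicit.
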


\section{A general formula for the irrationality exponent}

\label{SecIrrExp}In this section, we prove the following theorem, which allows
to compute the irrationality exponent of a real number $\alpha$ by knowing
suitable rational approximations.

\begin{theorem}
\label{ThIrratGen}Let $\alpha\in\mathbb{R}.$ Let $p_{n}\in\mathbb{Z}$ and
$q_{n}\in\mathbb{Z}_{>0}$ be coprime for all $n\geq0,$ with $\lim
_{n\rightarrow\infty}q_{n}=+\infty.$ Assume that there exist positive
constants $\rho,$ $\sigma$ and $\tau$ such that $q_{n+1}\geq\rho q_{n}$ and%
\begin{equation}
\frac{\sigma}{q_{n}q_{n+1}}\leq\left\vert \alpha-\frac{p_{n}}{q_{n}%
}\right\vert \leq\frac{\tau}{q_{n}q_{n+1}} \label{Daniel1}%
\end{equation}
for all large $n$. Then $\alpha$ is irrational and its irrationality exponent
is%
\begin{equation}
\mu\left(  \alpha\right)  =1+\limsup_{n\rightarrow\infty}\frac{\log q_{n+1}%
}{\log q_{n}}. \label{Daniel2}%
\end{equation}

\end{theorem}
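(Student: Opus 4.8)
The plan is to prove the two inequalities $\mu(\alpha)\ge 1+\limsup \frac{\log q_{n+1}}{\log q_n}$ and $\mu(\alpha)\le 1+\limsup\frac{\log q_{n+1}}{\log q_n}$ separately, using the two-sided estimate \eqref{Daniel1}. Write $L:=\limsup_{n\to\infty}\frac{\log q_{n+1}}{\log q_n}$; note $L\ge 1$ since $q_n$ is (eventually) nondecreasing, and if $L=\infty$ the lower bound alone will force $\mu(\alpha)=\infty$, so one may as well keep the argument uniform in that case. Irrationality is immediate: from the left inequality in \eqref{Daniel1}, $|\alpha-p_n/q_n|>0$ for all large $n$, so $\alpha$ differs from infinitely many distinct rationals (the $q_n\to\infty$ and coprimality give infinitely many distinct $p_n/q_n$), hence $\alpha\notin\mathbb{Q}$.

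First I would prove the lower bound $\mu(\alpha)\ge 1+L$. Fix any $\mu<1+L$; I must show \eqref{1.1} has infinitely many solutions. Choose $\varepsilon>0$ with $\mu<1+L-\varepsilon$ roughly; more precisely, since $\limsup\frac{\log q_{n+1}}{\log q_n}=L$, there are infinitely many $n$ with $\log q_{n+1}\ge (L-\varepsilon')\log q_n$, i.e. $q_{n+1}\ge q_n^{\,L-\varepsilon'}$. For such $n$, the right-hand inequality in \eqref{Daniel1} gives
\[
\left|\alpha-\frac{p_n}{q_n}\right|\le \frac{\tau}{q_nq_{n+1}}\le \frac{\tau}{q_n^{\,1+L-\varepsilon'}}\le \frac{1}{q_n^{\mu}}
\]
once $n$ is large enough, because $q_n\to\infty$ absorbs the constant $\tau$ when $\mu<1+L-\varepsilon'$. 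This produces infinitely many solutions $p_n/q_n$, so $\mu\le\mu(\alpha)$; letting $\mu\uparrow 1+L$ gives $\mu(\alpha)\ge 1+L$. (If $L=\infty$ the same computation with arbitrarily large exponents shows $\mu(\alpha)=\infty$ and we are done.)

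For the upper bound $\mu(\alpha)\le 1+L$ (assume now $L<\infty$), fix $\mu>1+L$ and show \eqref{1.1} has only finitely many rational solutions. The key structural fact, which is the main obstacle, is that \emph{every} good rational approximation to $\alpha$ must essentially be one of the $p_n/q_n$: if $p/q$ is a reduced fraction with $q$ large and $|\alpha-p/q|$ small, pick $n$ with $q_n\le q<q_{n+1}$ (possible since $q_n\to\infty$ and $q_n\le q_{n+1}$); then
\[
\left|\frac{p}{q}-\frac{p_n}{q_n}\right|\le \left|\alpha-\frac pq\right|+\left|\alpha-\frac{p_n}{q_n}\right|.
\]
If $p/q\ne p_n/q_n$, the left side is $\ge \frac{1}{qq_n}$, while by the \emph{lower} bound in \eqref{Daniel1}, $|\alpha-p_n/q_n|\ge \frac{\sigma}{q_nq_{n+1}}$. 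The standard comparison — balancing $\frac{1}{qq_n}$ against $\frac{\tau}{q_nq_{n+1}}$ and $\frac{\sigma}{q_nq_{n+1}}$ — shows that a solution with $p/q\ne p_n/q_n$ forces $|\alpha-p/q|\gg \frac{1}{qq_{n+1}}\gg\frac{1}{q\cdot q^{L+\varepsilon}}$ (using $q_{n+1}\le q_n^{\,L+\varepsilon}\le q^{\,L+\varepsilon}$ for $n$ large, from the definition of $\limsup$), which is incompatible with $|\alpha-p/q|<q^{-\mu}$ for $\mu>1+L$ once $q$ is large. Hence for large $q$ any solution satisfies $p/q=p_n/q_n$ for the corresponding $n$; and for those, $|\alpha-p_n/q_n|\ge \frac{\sigma}{q_nq_{n+1}}$ together with $q_{n+1}\le q_n^{\,L+\varepsilon}$ gives $|\alpha-p_n/q_n|\ge \sigma q_n^{-(1+L+\varepsilon)}>q_n^{-\mu}$ for large $n$, contradicting \eqref{1.1} again (choosing $\varepsilon$ so that $1+L+\varepsilon<\mu$). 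So only finitely many solutions exist, giving $\mu(\alpha)\le \mu$; letting $\mu\downarrow 1+L$ finishes the proof. I would be careful to set up the one $\varepsilon$-budget at the start so that all three invocations of "$n$ large" are simultaneously valid, and to handle the edge case where $q_n=q_{n+1}$ for some $n$ (harmless, since we only need $q_n\le q_{n+1}$ and $q_n\to\infty$).
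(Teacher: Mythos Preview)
Your lower bound argument and the irrationality claim are fine and match the paper's. The upper bound, however, has a genuine gap.

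First, you assert that one may pick $n$ with $q_n\le q<q_{n+1}$ ``since $q_n\to\infty$ and $q_n\le q_{n+1}$''. But the hypothesis only gives $q_{n+1}\ge\rho q_n$ for some $\rho>0$, and in the paper's applications one actually takes $\rho=1/L<1$, so $(q_n)$ need not be monotone. (Your claim $L\ge 1$ ``since $q_n$ is eventually nondecreasing'' rests on the same unjustified assumption; $L\ge1$ is true, but it follows from $q_{n+1}\ge\rho q_n$ together with $q_n\to\infty$.)

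Second, even granting monotonicity, your ``standard comparison'' does not deliver $|\alpha-p/q|\gg 1/(qq_{n+1})$ for your choice of $n$. From $|p/q-p_n/q_n|\ge 1/(qq_n)$ and $|\alpha-p_n/q_n|\le\tau/(q_nq_{n+1})$ one gets
\[
\left|\alpha-\frac{p}{q}\right|\ge\frac{1}{qq_n}-\frac{\tau}{q_nq_{n+1}},
\]
which is only useful when $q_{n+1}>\tau q$; you merely know $q_{n+1}>q$, and $\tau$ may well exceed $1$ (in the paper's applications $\tau=L+1$ or $\tau=M+1$). Comparing instead with $p_{n+1}/q_{n+1}$ would require $q_{n+2}\gg q$, which fails for the same reason when $\rho$ is small.

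The paper fixes this by choosing the index differently: take $n$ least such that $q_k>2\tau q/\rho$ for \emph{every} $k\ge n+1$ (such $n$ exists since $q_k\to\infty$; minimality forces $q_n\le 2\tau q/\rho$). Now compare with $p_{n+1}/q_{n+1}$: since $q_{n+2}\ge\rho q_{n+1}$ and $\tau q/(\rho q_{n+1})<1/2$, one obtains
\[
1\le|pq_{n+1}-qp_{n+1}|\le \frac{\tau q}{q_{n+2}}+q_{n+1}|q\alpha-p|<\frac{1}{2}+qq_{n+1}\left|\alpha-\frac{p}{q}\right|,
\]
whence $|\alpha-p/q|>1/(2qq_{n+1})$. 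Since $q_{n+1}=q_n^{\lambda_n}\le(2\tau q/\rho)^{\lambda_n}$ and $\lambda_n\le L+\varepsilon/2$ for $n$ large, the rest goes through as you intended. The moral is that the index must be chosen relative to the constants $\tau,\rho$, not just relative to $q$.
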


\begin{proof}
There is no loss of generality in assuming that $q_{n+1}\geq\rho q_{n}$ and
(\ref{Daniel1}) hold for every $n\geq0.$ Let $\lambda_{n}:=\log q_{n+1}/\log
q_{n}$ and $\lambda:=\limsup_{n\rightarrow\infty}\lambda_{n}.$ Let
$\varepsilon>0.$ By (\ref{Daniel1}), we have%
\[
\left\vert \alpha-\frac{p_{n}}{q_{n}}\right\vert \leq\frac{\tau}%
{q_{n}^{\lambda_{n}+1}}\leq\frac{1}{q_{n}^{\lambda_{n}+1-\varepsilon/2}}%
\]
for all large $n$ since $q_{n}\rightarrow+\infty.$ If $\lambda=\infty,$ then
$\mu\left(  \alpha\right)  =\infty$ and (\ref{Daniel2}) holds. So we may
assume that $\lambda<\infty.$ As $\lambda_{n}\geq\lambda-\varepsilon/2$ for
infinitely many $n,$ the inequality%
\begin{equation}
\left\vert \alpha-\frac{p_{n}}{q_{n}}\right\vert \leq\frac{1}{q_{n}%
^{\lambda+1-\varepsilon}} \label{Daniel2.1}%
\end{equation}
holds for infinitely many $n.$ Hence the inequality%
\begin{equation}
\left\vert \alpha-\frac{p}{q}\right\vert \leq\frac{1}{q^{\lambda
+1-\varepsilon}} \label{Daniel2.2}%
\end{equation}
has infinitely many solutions $p/q$ because $p_{n}$ and $q_{n}$ are coprime
and $q_{n}\rightarrow+\infty.$ This proves that $\mu\left(  \alpha\right)
\geq\lambda+1.$ We prove that $\lambda+1\geq\mu\left(  \alpha\right)  .$ Let
again $\varepsilon>0.$ We consider the inequality%
\begin{equation}
\left\vert \alpha-\frac{p}{q}\right\vert \leq\frac{1}{q^{\lambda
+1+\varepsilon}}. \label{Daniel3}%
\end{equation}
Our aim is to prove that this inequality has only finitely many solutions
$p/q.$ Without loss of generality, we can assume that $p$ and $q>0$ are
coprime. We distinguish two cases.

\textit{First case} : there exists $n$ such that $p/q=p_{n}/q_{n}.$ Then
$p_{n}=p$ and $q_{n}=q$ since $p/q$ and $p_{n}/q_{n}$ are both irreducible. By
(\ref{Daniel1}), we have%
\[
\left\vert \alpha-\frac{p}{q}\right\vert =\left\vert \alpha-\frac{p_{n}}%
{q_{n}}\right\vert \geq\frac{\sigma}{q_{n}^{\lambda_{n}+1}}=\frac{\sigma
}{q^{\lambda_{n}+1}}=\frac{\sigma q^{\varepsilon/2}}{q^{\lambda_{n}%
+1+\varepsilon/2}}.
\]
For large $q,$ we have $\sigma q^{\varepsilon/2}\geq1$ and $\lambda
_{n}+1+\varepsilon/2\leq\lambda+1+\varepsilon$ since $\lim_{n\rightarrow
\infty}q_{n}=+\infty.$ Hence
\begin{equation}
\left\vert \alpha-\frac{p}{q}\right\vert \geq\frac{1}{q^{\lambda
+1+\varepsilon}}. \label{Daniel5}%
\end{equation}

\textit{Second case} : for all $n,$ we have $pq_{n}-qp_{n}\neq0.$ Fix a
positive integer $Q$ such that $2\tau Q>\rho q_{1}$ and assume that $q\geq Q.$
Let $n=n(q)\geq1$ be the least integer such that%
\begin{equation}
\frac{\tau}{\rho q_{k}}q<\frac{1}{2} \label{5.13}%
\end{equation}
for all $k\geq n+1,$ which exists since $\lim_{k\rightarrow\infty}%
q_{k}=+\infty.$ Then%
\begin{equation}
q_{n}\leq\frac{2\tau}{\rho}q. \label{Majq_n}%
\end{equation}
As $pq_{n+1}-qp_{n+1}\neq0,$ we can write by (\ref{Daniel1}) and (\ref{5.13}):%
\begin{align*}
1  &  \leq\left\vert pq_{n+1}-qp_{n+1}\right\vert \leq q\left\vert
q_{n+1}\alpha-p_{n+1}\right\vert +q_{n+1}\left\vert q\alpha-p\right\vert \\
&  \leq\frac{\tau}{q_{n+2}}q+q_{n+1}\left\vert q\alpha-p\right\vert \leq
\frac{\tau}{\rho q_{n+1}}q+q_{n+1}\left\vert q\alpha-p\right\vert <\frac{1}%
{2}+qq_{n+1}\left\vert \alpha-\frac{p}{q}\right\vert .
\end{align*}
By using (\ref{Majq_n}), this implies%
\[
\left\vert \alpha-\frac{p}{q}\right\vert >\frac{1}{2qq_{n+1}}=\frac{1}%
{2qq_{n}^{\lambda_{n}}}\geq\frac{\rho^{\lambda_{n}}}{2^{\lambda_{n}+1}%
\tau^{\lambda_{n}}q^{\lambda_{n}+1}}=\frac{\rho^{\lambda_{n}}q^{\varepsilon
/2}}{2^{\lambda_{n}+1}\tau^{\lambda_{n}}q^{\lambda_{n}+1+\varepsilon/2}}.
\]
However there exist two constants $C$ and $C^{\prime}$ such that $C\leq
\lambda_{n}\leq C^{\prime},$ because $\lambda$ is finite, $q_{n+1}\geq\rho
q_{n}$ and $q_{n}\rightarrow+\infty.$ So there exists $C^{\prime\prime}>0$
such that%
\[
\left\vert \alpha-\frac{p}{q}\right\vert >\frac{C^{\prime\prime}%
q^{\varepsilon/2}}{q^{\lambda_{n}+1+\varepsilon/2}}.
\]
Now $n=n\left(  q\right)  $ tends to infinity with $q,$ and therefore for
$q\geq Q^{\prime}>Q$ we have $C^{\prime\prime}q^{\varepsilon/2}\geq1$ and
$\lambda_{n}\leq\lambda+\varepsilon/2.$ This yields for large $q$%
\begin{equation}
\left\vert \alpha-\frac{p}{q}\right\vert >\frac{1}{q^{\lambda+1+\varepsilon}}.
\label{Daniel6}%
\end{equation}
By (\ref{Daniel5}) and (\ref{Daniel6}) we conclude that the inequality
(\ref{Daniel3}) has only finitely many solutions for every $\varepsilon>0.$
The proof of Theorem \ref{ThIrratGen} is complete.
\end{proof}

\section{Proof of Theorem \ref{Th1} and Corollary \ref{Cor1.1}}

\label{SecProof1}First we prepare two lemmas.

\begin{lemma}
\label{LemNegative}Let $m$ and $h$ be positive integers. Assume that $a_{m}=1
$ and $a_{m+k}=-1$ for $1\leq k\leq h.$ Then%
\begin{align}
q_{m+k}  &  \geq\frac{k+2}{k+1}q_{m+k-1}\qquad\left(  0\leq k\leq h-1\right)
,\label{Help3.2}\\
q_{m+h}  &  \geq\frac{b_{m+h}}{h+1}q_{m+h-1}. \label{Help4.3}%
\end{align}

\end{lemma}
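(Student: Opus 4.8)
The idea is to exploit the two structural inequalities from Proposition~\ref{Lem1} that survive in the presence of a block of negative partial numerators: namely $q_{n}\geq 1$ and $q_{n}+a_{n+1}q_{n-1}\geq 1$. In the situation of the lemma we have $a_{m}=1$, so $q_{m}\geq q_{m-1}+a_{m}q_{m-2}$ actually gives $q_m\geq q_{m-1}$ (indeed $q_m = b_m q_{m-1}+q_{m-2}\geq q_{m-1}+q_{m-2}> q_{m-1}$, using $q_{m-2}\geq 1$). This is the base case $k=0$ of \eqref{Help3.2} since $\tfrac{k+2}{k+1}=2$ there; wait — more carefully, for $k=0$ the claim is $q_m\geq 2q_{m-1}$, which need not hold. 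So the induction must be set up on the quantity $q_{m+k}-q_{m+k-1}$ rather than on the ratio directly. The natural auxiliary claim to prove by induction on $k$, for $0\leq k\leq h-1$, is
\begin{equation*}
q_{m+k}-q_{m+k-1}\geq \frac{1}{k+1}\,q_{m+k-1},
\end{equation*}
which is exactly \eqref{Help3.2}; but to make the induction close I expect to need the sharper hypothesis $q_{m+k}-q_{m+k-1}\geq q_{m-1}$ (or something of that flavour, tracking the ``leftover'' from the $a_m=1$ step), together with $q_{m+k-1}\leq (k+1)q_{m-1}$.

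First I would establish, by induction on $k$ with $0\le k\le h-1$, the pair of inequalities
\begin{equation*}
q_{m+k}-q_{m+k-1}\ \ge\ q_{m-1}
\qquad\text{and}\qquad
q_{m+k-1}\ \le\ (k+1)\,q_{m-1}.
\end{equation*}
The base case $k=0$: the first is $q_m-q_{m-1}=(b_m-1)q_{m-1}+q_{m-2}\geq q_{m-2}\geq 1$; hmm, I actually want $\geq q_{m-1}$, so I should instead carry $q_{m+k}-q_{m+k-1}\geq q_{m+k-2}$ type information. Let me restate: the clean inductive invariant is
\begin{equation*}
(\ast_k)\colon\qquad q_{m+k}\ \ge\ q_{m+k-1}+q_{m+k-2}\quad\text{is false for }k\ge 1,
\end{equation*}
so instead I track $q_{m+k}=q_{m+k-1}-q_{m+k-2}+(q_{m+k}-q_{m+k-1}+q_{m+k-2})$. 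Concretely: since $a_{m+k}=-1$ for $1\le k\le h$, the recurrence reads $q_{m+k}=b_{m+k}q_{m+k-1}-q_{m+k-2}\ge q_{m+k-1}-q_{m+k-2}$, and Proposition~\ref{Lem1}'s second inequality $q_{m+k-1}-q_{m+k-2}=q_{m+k-1}+a_{m+k}q_{m+k-2}\ge 1$ is what keeps everything positive. The efficient route is to set $d_k:=q_{m+k}-q_{m+k-1}$ and show $d_k\ge d_{k-1}-2q_{m+k-2}+\dots$; rather than fight this, the cleanest is: prove by induction that $q_{m+k-1}\le (k+1)q_{m-1}$ and $q_{m+k}\ge q_{m+k-1}+q_{m-1}$ for $0\le k\le h-1$. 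From these two, $q_{m+k}\ge q_{m+k-1}+q_{m-1}\ge q_{m+k-1}+\tfrac{1}{k+1}q_{m+k-1}=\tfrac{k+2}{k+1}q_{m+k-1}$, giving \eqref{Help3.2}.

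For the inductive step I use $q_{m+k+1}=b_{m+k+1}q_{m+k}-q_{m+k-1}\ge q_{m+k}-q_{m+k-1}=q_{m+k}-q_{m+k-1}\ge q_{m-1}$ by the current hypothesis, which after adding $q_{m+k}$ gives $q_{m+k+1}\ge q_{m+k}+ (q_{m+k}-q_{m+k-1}-\text{slack})$; more directly $q_{m+k+1}-q_{m+k}=(b_{m+k+1}-1)q_{m+k}-q_{m+k-1}\ge q_{m+k}-q_{m+k-1}\ge q_{m-1}$, which propagates the second invariant; and $q_{m+k+1}\le 2q_{m+k}-q_{m+k-1}\le$ hmm, $b_{m+k+1}$ is unbounded. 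Here is the fix: the upper bound $q_{m+k-1}\le(k+1)q_{m-1}$ can only be maintained while all $b$'s in the block equal... no — actually \eqref{Help3.2} is only claimed for $0\le k\le h-1$, i.e.\ through $q_{m+h-1}$, and within the block $a_{m+k}=-1$ forces nothing on $b_{m+k}$, so the upper bound genuinely requires care. I believe the resolution used in the paper is that the condition $b_n+a_{n+1}\ge 1$ from \eqref{Int2} gives $b_{m+k}\ge 2$ whenever $a_{m+k+1}=-1$, i.e.\ for $1\le k\le h-1$, but that helps the lower bound, not the upper. So the upper bound $q_{m+k-1}\le(k+1)q_{m-1}$ must in fact be replaced — I expect the real statement being inducted is just the ratio bound \eqref{Help3.2} itself together with positivity, and the honest inductive step is
\begin{equation*}
\frac{q_{m+k+1}}{q_{m+k}}=b_{m+k+1}-\frac{q_{m+k-1}}{q_{m+k}}\ \ge\ 1-\frac{q_{m+k-1}}{q_{m+k}}\ \ge\ 1-\frac{k+1}{k+2}=\frac{1}{k+2}\cdot\frac{k+3}{k+2}\cdot\frac{k+2}{k+1}?
\end{equation*}
— so from $q_{m+k}\ge\frac{k+2}{k+1}q_{m+k-1}$, i.e.\ $\frac{q_{m+k-1}}{q_{m+k}}\le\frac{k+1}{k+2}$, one gets $\frac{q_{m+k+1}}{q_{m+k}}\ge 1-\frac{k+1}{k+2}=\frac{1}{k+2}$, which is \emph{not} $\frac{k+3}{k+2}$; the defect is that I discarded $b_{m+k+1}\ge 1$ too crudely. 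Using instead $b_{m+k+1}\ge 2$ (valid when $a_{m+k+2}=-1$, i.e.\ for $k+1\le h-1$) gives $\frac{q_{m+k+1}}{q_{m+k}}\ge 2-\frac{k+1}{k+2}=\frac{k+3}{k+2}$, which is exactly the claim for index $k+1$. The base case $k=0$ (claim $q_m\ge 2q_{m-1}$) again needs $b_m\ge 2$; but $a_m=1$, so there is no forcing — yet here $a_{m+1}=-1$ and the constraint $b_m+a_{m+1}\ge 1$ only gives $b_m\ge 2$. Good: $b_m+a_{m+1}\ge 1$ with $a_{m+1}=-1$ does give $b_m\ge 2$, so $q_m=b_mq_{m-1}+q_{m-2}\ge 2q_{m-1}$, settling $k=0$.

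\medskip

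So the clean plan is: \textbf{(1)} Note $a_{m+k+1}=-1$ for $0\le k\le h-1$, hence by \eqref{Int2} $b_{m+k}+a_{m+k+1}\ge 1$ gives $b_{m+k}\ge 2$ for $0\le k\le h-1$; in particular $b_m\ge 2$. \textbf{(2)} Prove \eqref{Help3.2} by induction on $k$: base case $q_m\ge 2q_{m-1}=\frac{0+2}{0+1}q_{m-1}$ from $b_m\ge 2$ and $q_{m-2}\ge 1>0$ (Proposition~\ref{Lem1}); inductive step, assuming $q_{m+k}\ge\frac{k+2}{k+1}q_{m+k-1}$ with $k\le h-2$, write $q_{m+k+1}=b_{m+k+1}q_{m+k}-q_{m+k-1}\ge 2q_{m+k}-q_{m+k-1}$ using $b_{m+k+1}\ge 2$ from step~(1), and then $\ge 2q_{m+k}-\frac{k+1}{k+2}q_{m+k}=\frac{k+3}{k+2}q_{m+k}$, which is the claim at $k+1$. \textbf{(3)} For \eqref{Help4.3}: at index $m+h$ we have $a_{m+h}=-1$ but no control on $b_{m+h}$ forcing it $\ge 2$ (since $a_{m+h+1}$ is unconstrained here), so keep $b_{m+h}$ as is: $q_{m+h}=b_{m+h}q_{m+h-1}-q_{m+h-2}\ge b_{m+h}q_{m+h-1}-\frac{h}{h+1}q_{m+h-1}\cdot\frac{h+1}{h}$... rather, from \eqref{Help3.2} with $k=h-1$, $q_{m+h-1}\ge\frac{h+1}{h}q_{m+h-2}$, i.e.\ $q_{m+h-2}\le\frac{h}{h+1}q_{m+h-1}$, so $q_{m+h}\ge b_{m+h}q_{m+h-1}-\frac{h}{h+1}q_{m+h-1}=\bigl(b_{m+h}-\tfrac{h}{h+1}\bigr)q_{m+h-1}\ge\frac{b_{m+h}}{h+1}q_{m+h-1}$, the last step because $b_{m+h}-\frac{h}{h+1}\ge\frac{b_{m+h}}{h+1}\iff b_{m+h}\bigl(1-\tfrac{1}{h+1}\bigr)\ge\tfrac{h}{h+1}\iff b_{m+h}\cdot\tfrac{h}{h+1}\ge\tfrac{h}{h+1}\iff b_{m+h}\ge 1$, which holds.

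\medskip

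\textbf{Main obstacle.} The only delicate point is getting the arithmetic of the induction on $k$ to close with exactly the constant $\frac{k+2}{k+1}$ rather than something weaker; this hinges on having $b_{m+k}\ge 2$ throughout the block, which in turn comes precisely from the SRCF condition $b_n+a_{n+1}\ge 1$ together with $a_{m+k+1}=-1$. Once that observation is in place — that a maximal run of $a=-1$'s is preceded and accompanied by partial denominators $\ge 2$ — both \eqref{Help3.2} and \eqref{Help4.3} are short computations of the type sketched above. I do not foresee any genuinely hard step beyond bookkeeping the index ranges carefully (the block runs $m+1,\dots,m+h$, so $b_{m+k}\ge 2$ is available exactly for $0\le k\le h-1$, which is just enough for \eqref{Help3.2} and leaves $b_{m+h}$ free, matching the different shape of \eqref{Help4.3}).
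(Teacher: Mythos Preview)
Your clean plan (steps (1)--(3)) is correct and essentially identical to the paper's proof: both observe that $a_{m+k+1}=-1$ with \eqref{Int2} forces $b_{m+k}\ge 2$ for $0\le k\le h-1$, then induct on the ratio bound via $q_{m+k+1}\ge 2q_{m+k}-q_{m+k-1}\ge 2q_{m+k}-\tfrac{k+1}{k+2}q_{m+k}=\tfrac{k+3}{k+2}q_{m+k}$, and finally derive \eqref{Help4.3} from \eqref{Help3.2} at $k=h-1$. One tiny slip: in the base case you invoke $q_{m-2}\ge 1$ from Proposition~\ref{Lem1}, but for $m=1$ one has $q_{-1}=0$; this is harmless since you only need $q_{m-2}\ge 0$ to conclude $q_m=b_m q_{m-1}+q_{m-2}\ge 2q_{m-1}$.
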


\begin{proof}
First we prove (\ref{Help3.2}). It is true for $h=1.$ Indeed, in this case we
have $a_{m}=1,$ $a_{m+1}=-1$ and $b_{m}\geq2,$ whence $q_{m}=b_{m}%
q_{m-1}+q_{m-2}\geq2q_{m-1}.$ Assume that (\ref{Help3.2}) is satisfied at the
order $h\geq1,$ and prove it at the order $h+1.$ By (\ref{Help3.2}),
\[
q_{m+h-2}\leq\frac{h}{h+1}q_{m+h-1}.
\]
Since $a_{m+h}=a_{m+h+1}=-1,$ we have $b_{m+h}\geq2$ and%
\[
q_{m+h}\geq2q_{m+h-1}-\frac{h}{h+1}q_{m+h-1}=\frac{h+2}{h+1}q_{m+h-1},
\]
which proves (\ref{Help3.2}) by induction. By using (\ref{Help3.2}) we get%
\[
q_{m+h}\geq b_{m+h}\left(  q_{m+h-1}-q_{m+h-2}\right)  \geq b_{m+h}%
q_{m+h-1}\left(  1-\frac{h}{h+1}\right)  ,
\]
which proves (\ref{Help4.3}).
\end{proof}

\begin{lemma}
\label{Lemb_n}Let $m$ and $h$ be positive integers such that $b_{m}\geq3$ and
$b_{m+k}=2$ for $1\leq k\leq h$. Assume that $b_{m-1}\geq2$ if $m\geq2.$ Then%
\begin{equation}
q_{m+k}\geq\frac{k+2}{k+1}q_{m+k-1}\qquad\left(  0\leq k\leq h\right)  .
\label{Rec_A}%
\end{equation}

\end{lemma}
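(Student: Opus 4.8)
The plan is to prove \eqref{Rec_A} by induction on $k$ for $0\le k\le h$, in the same spirit as Lemma \ref{LemNegative}. For the base case $k=0$ one must show $q_{m}\ge 2q_{m-1}$, which I would split according to the sign of $a_{m}$. If $a_{m}=1$, then $q_{m}=b_{m}q_{m-1}+q_{m-2}\ge 3q_{m-1}\ge 2q_{m-1}$ since $q_{m-2}\ge 0$. If $a_{m}=-1$ and $m\ge 2$, the hypothesis $b_{m-1}\ge 2$ enters: by Proposition \ref{Lem1} it gives $q_{m-1}>q_{m-2}$, hence $q_{m}=b_{m}q_{m-1}-q_{m-2}\ge 3q_{m-1}-q_{m-2}>2q_{m-1}$. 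The remaining case $m=1$ is immediate, since $q_{-1}=0$ forces $q_{1}=b_{1}\ge 3\ge 2q_{0}$ regardless of $a_{1}$.

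For the inductive step, assume \eqref{Rec_A} holds at order $k$ with $0\le k\le h-1$; I want it at order $k+1$, i.e. $q_{m+k+1}\ge\frac{k+3}{k+2}q_{m+k}$. Since $1\le k+1\le h$ we have $b_{m+k+1}=2$, so by \eqref{Int5} $q_{m+k+1}=2q_{m+k}+a_{m+k+1}q_{m+k-1}$. If $a_{m+k+1}=1$ this is $\ge 2q_{m+k}\ge\frac{k+3}{k+2}q_{m+k}$ because $\frac{k+3}{k+2}\le 2$. If $a_{m+k+1}=-1$, I rewrite the inductive hypothesis as $q_{m+k-1}\le\frac{k+1}{k+2}q_{m+k}$ and obtain $q_{m+k+1}=2q_{m+k}-q_{m+k-1}\ge\bigl(2-\frac{k+1}{k+2}\bigr)q_{m+k}=\frac{k+3}{k+2}q_{m+k}$, which is exactly the bound at order $k+1$. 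This closes the induction.

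I do not expect a genuine obstacle here: the argument is a short induction, and the only points requiring attention are the base case $k=0$ (where the hypothesis $b_{m-1}\ge 2$ is used exactly once, through Proposition \ref{Lem1}, to control $q_{m-2}$ relative to $q_{m-1}$ when $a_{m}=-1$) and the need to treat both possible signs of the partial numerators $a_{n}$ at each stage, the positive sign being the trivial one. The estimate is the $b_{n}=2$ analogue of \eqref{Help3.2} and will be used, together with Lemma \ref{LemNegative}, in the proof of Theorem \ref{Th1} under hypothesis (B).
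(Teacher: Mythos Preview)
Your proof is correct and follows essentially the same induction as the paper. The only cosmetic difference is that the paper avoids the case splits on the sign of $a_{n}$ by observing once that $a_{n}q_{n-2}\ge -q_{n-2}$ (since $q_{n-2}\ge 0$), so the worst case $a_{n}=-1$ already gives the desired bound; your treatment of each sign separately is redundant but harmless.
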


\begin{proof}
By Proposition \ref{Lem1}, we see that $q_{m-2}<q_{m-1}$ for $m\geq2,$ and
this remains true for $m=1.$ Therefore for $m\geq1$ we have
\[
q_{m}\geq b_{m}q_{m-1}-q_{m-2}\geq(b_{m}-1)q_{m-1}\geq2q_{m-1}.
\]
Hence (\ref{Rec_A}) is true for $k=0.$ Assume that (\ref{Rec_A}) is true for
some $k\leq h-1.$ Then%
\[
q_{m+k+1}\geq2q_{m+k}-q_{m+k-1}\geq\left(  2-\frac{k+1}{k+2}\right)
q_{m+k}=\frac{k+3}{k+2}q_{m+k},
\]
which proves (\ref{Rec_A}) by induction.
\end{proof}

\textbf{Proof of Theorem \ref{Th1}.} We observe that%
\begin{align*}
\limsup_{n\rightarrow\infty}\frac{\log q_{n+1}}{\log q_{n}}  &  =1+\limsup
_{n\rightarrow\infty}\frac{\log b_{n+1}+\log q_{n}+\log\left(  1+a_{n+1}%
b_{n+1}^{-1}q_{n-1}q_{n}^{-1}\right)  }{\log q_{n}}\\
&  =2+\limsup_{n\rightarrow\infty}\frac{\log b_{n+1}}{\log q_{n}}%
\end{align*}
if $q_{n-1}q_{n}^{-1}$ is bounded. Hence we only have to check that Theorem
\ref{ThIrratGen} applies when $(A),$ $(B),$ $(C)$ or $(D)$ are satisfied.
Besides, it results from (\ref{Ksi2}) that (\ref{Daniel1}) holds if there
exist two positive constants $\gamma$ and $\delta$ such that $\gamma
q_{n+1}\leq\xi_{n+1}\leq\delta q_{n+1}$ for all large $n.\smallskip$

Assume that $(A)$ is satisfied. Let $n$ large. As the number of consecutive
$k$ such that $a_{k}=-1$ is bounded, there exists $p\leq n$ such that
$a_{p}=1.$ We distinguish four cases.

\textit{Case 1.} $a_{n+1}=a_{n+2}=1.$ Then $0<x_{n+1}\leq1$ by Proposition
\ref{Lem-x-n,k}, so that%
\begin{equation}
q_{n+1}\leq\xi_{n+1}\leq q_{n+1}+q_{n}. \label{Approx1.1}%
\end{equation}
As $q_{n+1}\geq q_{n}$ by Proposition \ref{Lem1}, we obtain $q_{n+1}\leq
\xi_{n+1}\leq2q_{n+1}.$

\textit{Case 2.} $a_{n+1}=-a_{n+2}=1.$ Then $-1\leq x_{n+1}<0$ by Proposition
\ref{Lem-x-n,k}, whence%
\begin{equation}
q_{n+1}-q_{n}\leq\xi_{n+1}\leq q_{n+1}. \label{Approx2}%
\end{equation}
By (\ref{Help3.2}) with $m=n+1,$ $h=1$ and $k=0,$ we see that $q_{n+1}%
\geq2q_{n},$ and so%
\[
\frac{1}{2}q_{n+1}\leq\xi_{n+1}\leq q_{n+1}.
\]

\textit{Case 3.} $a_{n+1}=-a_{n+2}=-1.$ Then (\ref{Approx1.1}) holds. Let
$m\leq n$ be the greatest integer such that $a_{m}=1.$ By (\ref{Help4.3}) with
$h=n+1-m,$ we have%
\[
q_{n+1}\geq\frac{1}{h+1}q_{n}\geq\frac{1}{L}q_{n}%
\]
as $h<L$ for some $L\geq2$ since the number of consecutive $k$ such that
$a_{m+k}=-1$ is bounded. So we obtain $q_{n+1}\leq\xi_{n+1}\leq\left(
L+1\right)  q_{n+1}.$

\textit{Case 4.} $a_{n+1}=a_{n+2}=-1.$ Then (\ref{Approx2}) holds. Let $m\leq
n$ be the greatest integer such that $a_{m}=1.$ By (\ref{Help3.2}) with
$h=n+2-m$ and $k=h-1,$ we see that%
\[
q_{n+1}\geq\left(  1+\frac{1}{h}\right)  q_{n}\geq\frac{L+1}{L}q_{n}.
\]
By (\ref{Approx2}), we obtain%
\[
\frac{1}{L+1}q_{n+1}\leq\xi_{n+1}\leq q_{n+1}.
\]
Theorem \ref{ThIrratGen} applies with $\rho=1/L,$ $\sigma=1/(L+1)$ and
$\tau=L+1.\smallskip$

Assume that $(B)$ is satisfied. Let $n$ large. As $b_{n+1}\geq2,$ we have
$q_{n+1}>q_{n}$ by Proposition \ref{Lem1}. Therefore by Proposition
\ref{Lem-x-n,k}
\begin{equation}
\xi_{n+1}\leq q_{n+1}+q_{n}\leq2q_{n+1}. \label{AA}%
\end{equation}
For the lower bound of $\xi_{n+1},$ we distinguish two cases.

\textit{Case 1.} If $b_{n+1}\geq3,$ then $q_{n+1}\geq b_{n+1}q_{n}-q_{n-1}%
\geq2q_{n}.$

\textit{Case 2. }If $b_{n+1}=2,$ we define $m\leq n$ to be the greatest
integer such that $b_{m}\geq3.$ By (\ref{Rec_A}) with $k=h=n+1-m,$ we have%
\begin{equation}
q_{n+1}\geq\frac{h+2}{h+1}q_{n}\geq\frac{M+1}{M}q_{n} \label{MajAh}%
\end{equation}
as $h<M$ for some $M\geq2$ since the number of consecutive $k$ such that
$b_{m+k}=2$ is bounded.

So in both cases (\ref{MajAh}) holds and
\begin{equation}
\xi_{n+1}\geq q_{n+1}-q_{n}\geq q_{n+1}-\frac{M}{M+1}q_{n+1}\geq\frac{1}%
{M+1}q_{n+1}. \label{BB}%
\end{equation}
Theorem \ref{ThIrratGen} applies with $\rho=1,$ $\sigma=1/2$ and
$\tau=M+1.\smallskip$

Assume that $(C)$ is satisfied. Then $q_{n+1}>q_{n}$ for $n$ large by
Proposition \ref{Lem1}. Therefore (\ref{AA}) holds. For the lower bound of
$\xi_{n+1},$ we distinguish three cases.

\textit{Case 1}. If $a_{n+2}=1,$ then $x_{n+1}\geq0$ by Proposition
\ref{Lem-x-n,k}, whence $\xi_{n+1}\geq q_{n+1}.$

\textit{Case 2}. If $a_{n+2}=-a_{n+1}=-1,$ then $b_{n+1}\geq2$ and
$q_{n+1}\geq2q_{n}.$ Therefore%
\begin{equation}
\xi_{n+1}\geq q_{n+1}-q_{n}\geq\frac{1}{2}q_{n+1}. \label{Min2}%
\end{equation}

\textit{Case 3}. If $a_{n+2}=$ $a_{n+1}=-1,$ then (\ref{Min2}) holds again,
since
\[
q_{n+1}=\left(  b_{n+1}+a_{n+1}\right)  q_{n}-a_{n+1}\left(  q_{n}%
-q_{n-1}\right)  \geq2q_{n}.
\]

Theorem \ref{ThIrratGen} applies with $\rho=1,$ $\sigma=1/2$ and
$\tau=2.\smallskip\smallskip$

Assume that $(D)$ is satisfied. Again, $q_{n+1}>q_{n}$ for $n$ large by
Proposition \ref{Lem1} and (\ref{AA}) holds again. For the lower bound of
$\xi_{n+1},$ we distinguish two cases.

\textit{Case 1}. If $a_{n+2}=1,$ then $\xi_{n+1}\geq q_{n+1}.$

\textit{Case 2}. If $a_{n+2}=-1$, then $b_{n+1}\geq3$ and $q_{n+1}\geq
b_{n+1}q_{n}-q_{n-1}\geq2q_{n}$, so that (\ref{Min2}) holds.

Theorem \ref{ThIrratGen} applies with $\rho=1,$ $\sigma=1/2$ and
$\tau=2.\smallskip\medskip$

\textbf{Proof of Corollary \ref{Cor1.1}}

Assume that $(A),$ $(B)$, $(C)$ or $(D)$ is satisfied. We will prove that
there exists a positive constant $K$ such that%
\begin{equation}
K^{n}\prod_{k=1}^{n}b_{k}\leq q_{n}\leq F_{n+1}\prod_{k=1}^{n}b_{k}%
\quad\left(  n\geq0\right)  , \label{Encad}%
\end{equation}
where $F_{n}$ is the \textit{n}-th Fibonacci number. It is clear that
Corollary \ref{Cor1.1} results immediately from Theorem \ref{Th1} and
(\ref{Encad}). The second inequality in (\ref{Encad}) comes from the fact that
$q_{0}=1,$ $q_{1}=b_{1}$ and%
\[
q_{n+1}=b_{n+1}q_{n}+a_{n}q_{n-1}\leq q_{n}+q_{n-1}\quad\left(  n\geq1\right)
.
\]
Now we prove the first inequality in (\ref{Encad}) in each of the cases $(A),$
$(B)$, $(C)$ or $(D).$ For this, it is sufficient to prove the existence of a
positive number $\kappa$ such that $q_{n+1}\geq\kappa b_{n+1}q_{n}$ for all
large $n.\smallskip$

Assume that $(A)$ is satisfied, which means that the number of consecutive $n$
such that $a_{n}=-1$ is less than $L$ $\left(  \geq2\right)  .$

If $a_{n+1}=1,$ then $q_{n+1}\geq b_{n+1}q_{n},$ and so
\begin{equation}
q_{n+1}\geq\frac{b_{n+1}}{L}q_{n}. \label{Min01}%
\end{equation}
If $a_{n+1}=-1,$ let $m<n$ be the greatest integer such that $a_{m}=1.$ Then
(\ref{Help4.3}) with $h=n-m+1$ shows that (\ref{Min01}) also holds in this
case.\smallskip

Assume that $(B)$ is satisfied, which means that the number of consecutive $n$
such that $b_{n}=2$ is less than $M$ $\left(  \geq2\right)  .$ Then by
(\ref{MajAh}) we have%
\[
q_{n+1}\geq b_{n+1}\left(  q_{n}-q_{n-1}\right)  \geq b_{n+1}\left(
q_{n}-\frac{M}{M+1}q_{n}\right)  =\frac{b_{n+1}}{M+1}q_{n}\qquad\left(
n\geq1\right)  .
\]

Assume that $(C)$ is satisfied. If $a_{n+1}=1,$ then $q_{n+1}\geq b_{n+1}%
q_{n}.$

If $a_{n+1}=-1,$ then $b_{n+1}\geq3$. Since $q_{n}>q_{n-1},$ we get%
\[
q_{n+1}\geq\frac{1}{2}b_{n+1}q_{n}+\frac{3}{2}q_{n}-q_{n-1}\geq\frac{1}%
{2}b_{n+1}q_{n}.
\]

Assume that $(D)$ is satisfied. If $a_{n+1}=1,$ then again $q_{n+1}\geq
b_{n+1}q_{n}.$

If $a_{n+1}=-1,$ then $b_{n+1}\geq2$ and $q_{n}>q_{n-1},$ and so%
\[
q_{n+1}\geq\frac{1}{2}b_{n+1}q_{n}+q_{n}-q_{n-1}\geq\frac{1}{2}b_{n+1}q_{n}.
\]

\section{Applications}

\label{SecEx}We give here examples of semi-regular continued fractions and
compute their irrationality exponent by using Corollary \ref{Cor1.1}.\medskip

\textbf{Example 1.} For $c\neq0,-1,-2,\ldots,$ the modified Bessel function is
defined by%
\[
_{0}F_{1}\left(  c;z\right)  :=\sum_{n=0}^{\infty}\frac{z^{n}}{\left(
c\right)  _{n}n!},
\]
where $(c)_{0}=1$ and $(c)_{n}=c\left(  c+1\right)  \cdots\left(
c+n-1\right)  $ for $n\geq1.$ It is known that%
\[
\frac{_{0}F_{1}\left(  1;z\right)  }{_{0}F_{1}\left(  2;z\right)  }=1+\frac
{z}{2}%
\genfrac{}{}{0pt}{}{{}}{+}%
\frac{z}{3}%
\genfrac{}{}{0pt}{}{{}}{+}%
\frac{z}{4}%
\genfrac{}{}{0pt}{}{{}}{+}%
\frac{z}{5}%
\genfrac{}{}{0pt}{}{{}}{+\cdots}%
\]
(cf. \cite[Formula (6.1.51)]{JoTh}). Corollary \ref{Cor1.1} $(B),$ $(C)$ or
$(D)$ applies and we get%
\[
\mu\left(  1+\frac{a_{1}}{2}%
\genfrac{}{}{0pt}{}{{}}{+}%
\frac{a_{2}}{3}%
\genfrac{}{}{0pt}{}{{}}{+}%
\frac{a_{3}}{4}%
\genfrac{}{}{0pt}{}{{}}{+}%
\frac{a_{4}}{5}%
\genfrac{}{}{0pt}{}{{}}{+}%
\frac{a_{5}}{6}%
\genfrac{}{}{0pt}{}{{}}{+}%
\frac{a_{6}}{7}%
\genfrac{}{}{0pt}{}{{}}{+\cdots}%
\right)  =2
\]
for any sequence $(a_{n})_{n\geq1}$ with $a_{n}=\pm1.\medskip$

\textbf{Example 2.} For any integer $b\geq1,$ we have%
\[
e^{\frac{1}{b}}=1+\frac{1}{b}%
\genfrac{}{}{0pt}{}{{}}{-}%
\frac{1}{2}%
\genfrac{}{}{0pt}{}{{}}{+}%
\frac{1}{3b}%
\genfrac{}{}{0pt}{}{{}}{-}%
\frac{1}{2}%
\genfrac{}{}{0pt}{}{{}}{+}%
\frac{1}{5b}%
\genfrac{}{}{0pt}{}{{}}{-}%
\frac{1}{2}%
\genfrac{}{}{0pt}{}{{}}{+}%
\frac{1}{7b}%
\genfrac{}{}{0pt}{}{{}}{-}%
\frac{1}{2}%
\genfrac{}{}{0pt}{}{{}}{+\cdots}%
\]
(cf \cite[(A.61)]{Bor}). Then Corollary \ref{Cor1.1} $(A)$ yields $\mu\left(
e^{1/b}\right)  =2.$ More generally, Corollary \ref{Cor1.1} $(B)$ applies and
we get%
\[
\mu\left(  1+\frac{a_{1}}{b}%
\genfrac{}{}{0pt}{}{{}}{+}%
\frac{a_{2}}{2}%
\genfrac{}{}{0pt}{}{{}}{+}%
\frac{a_{3}}{3b}%
\genfrac{}{}{0pt}{}{{}}{+}%
\frac{a_{4}}{2}%
\genfrac{}{}{0pt}{}{{}}{+}%
\frac{a_{5}}{5b}%
\genfrac{}{}{0pt}{}{{}}{+}%
\frac{a_{6}}{2}%
\genfrac{}{}{0pt}{}{{}}{+\cdots}%
\right)  =2
\]
for any sequence $(a_{n})_{n\geq1}$ with $a_{n}=\pm1.\medskip$

\textbf{Example 3.} Let $\alpha>0$ be any irrational number with the regular
continued fraction expansion $\alpha^{-1}=\left[  c_{0};c_{1},c_{2}%
,c_{3},\ldots\right]  $ and let $b$ be any integer with $\left\vert
b\right\vert >1$. Let $p_{n}/q_{n}$ be the n-th convergent of the RCF
expansion of $\alpha^{-1}$. Adams and Davison (\cite{Adams}, see also
\cite[Th. 7.18]{Bor}) proved that%
\begin{equation}
S_{b}(\alpha):=\left(  b-1\right)  \sum_{k=1}^{\infty}\frac{1}{b^{\left\lfloor
k\alpha\right\rfloor }}=b_{0}+\frac{1}{b_{1}}%
\genfrac{}{}{0pt}{}{{}}{+}%
\frac{1}{b_{2}}%
\genfrac{}{}{0pt}{}{{}}{+}%
\frac{1}{b_{3}}%
\genfrac{}{}{0pt}{}{{}}{+\cdots}%
, \label{SRCF}%
\end{equation}
where $b_{0}=c_{0}b$ and $b_{n}=\left(  b^{q_{n}}-b^{q_{n-2}}\right)  /\left(
b^{q_{n-1}}-1\right)  $ for $n\geq1$. Note that $b_{n}$ is an integer
(positive or negative), since $q_{n-1}$ divides $q_{n}-q_{n-2}.$ It is clear
that (\ref{SRCF}) can be written as a SRCF. In particular \cite{Dav}, if
$\alpha=(1+\sqrt{5})/2,$ then $q_{n}=F_{n+1}$ for $n\geq-1,$ where $F_{n}$ is
the Fibonacci sequence defined by $F_{0}=0,$ $F_{1}=1$ and $F_{n+2}%
=F_{n+1}+F_{n}$ for all $n\geq0.$ Applying (\ref{SRCF}), we get%
\begin{align*}
S_{2}(\alpha)  &  =\frac{1}{1}%
\genfrac{}{}{0pt}{}{{}}{+}%
\frac{1}{2^{F_{1}}}%
\genfrac{}{}{0pt}{}{{}}{+}%
\frac{1}{2^{F_{2}}}%
\genfrac{}{}{0pt}{}{{}}{+}%
\frac{1}{2^{F_{3}}}%
\genfrac{}{}{0pt}{}{{}}{+}%
\frac{1}{2^{F_{4}}}%
\genfrac{}{}{0pt}{}{{}}{+}%
\frac{1}{2^{F_{5}}}%
\genfrac{}{}{0pt}{}{{}}{+\cdots}%
,\\
S_{-2}(\alpha)  &  =\frac{1}{1}%
\genfrac{}{}{0pt}{}{{}}{+}%
\frac{\left(  -1\right)  ^{F_{2}}}{2^{F_{1}}}%
\genfrac{}{}{0pt}{}{{}}{+}%
\frac{\left(  -1\right)  ^{F_{3}}}{2^{F_{2}}}%
\genfrac{}{}{0pt}{}{{}}{+}%
\frac{\left(  -1\right)  ^{F_{4}}}{2^{F_{3}}}%
\genfrac{}{}{0pt}{}{{}}{+}%
\frac{\left(  -1\right)  ^{F_{5}}}{2^{F_{4}}}%
\genfrac{}{}{0pt}{}{{}}{+\cdots}%
.
\end{align*}
Returning to the general case, it is easily seen that%
\[
\limsup_{n\rightarrow\infty}\frac{\log\left\vert b_{n+1}\right\vert }%
{\log\left\vert b_{1}b_{2}\cdots b_{n}\right\vert }=\limsup_{n\rightarrow
\infty}\left(  \frac{q_{n+1}}{q_{n}}\right)  -1.
\]
Corollary \ref{Cor1.1} $(B)$, $(C)$ or $(D)$ applies and we get%
\begin{equation}
\mu\left(  b_{0}+\frac{a_{1}}{b_{1}}%
\genfrac{}{}{0pt}{}{{}}{+}%
\frac{a_{2}}{b_{2}}%
\genfrac{}{}{0pt}{}{{}}{+}%
\frac{a_{3}}{b_{3}}%
\genfrac{}{}{0pt}{}{{}}{+\cdots}%
\right)  =1+\limsup_{n\rightarrow\infty}\left(  \frac{q_{n+1}}{q_{n}}\right)
\label{Formule}%
\end{equation}
for any sequence $(a_{n})_{n\geq1}$ with $a_{n}=\pm1.$ For $\alpha=(1+\sqrt
{5})/2,$ we have%
\begin{equation}
\mu\left(  S_{2}(\alpha)\right)  =\mu\left(  S_{-2}(\alpha)\right)
=\frac{3+\sqrt{5}}{2}=2.61803\ldots. \label{NbreOr}%
\end{equation}
Returning to the general case (\ref{Formule}), it is clear that $S_{b}%
(\alpha)$ is a Liouville number if and only if $c_{n}$ is unbounded since
$c_{n+1}<q_{n+1}/q_{n}<c_{n+1}+1$ for all $n\geq0$ by (\ref{Int5}). For
example, $S_{b}(\alpha)$ is a Liouville number if $\mu\left(  \alpha\right)
>2.$ Similarly $S_{b}(e)$ is a Liouville number.

Moreover, let $L=\limsup_{n\rightarrow\infty}\left(  q_{n+1}/q_{n}\right)  .$
Then $L^{2}\geq L+1$ by (\ref{Int5}). Therefore $L\geq\left(  1+\sqrt
{5}\right)  /2,$ the golden number. Consequently $\mu\left(  S_{b}%
(\alpha)\right)  \geq\left(  3+\sqrt{5}\right)  /2.$

Finally, assume that $\alpha$ is reduced quadratic. Then its RCF expansion is
purely periodic, so that $c_{0}=0$ and $c_{n+H}=c_{n}$ for some $H\geq1$ and
all $n\geq1.$ Hence
\[
\frac{q_{kH}}{q_{kH-1}}=\left[  c_{kH},c_{kH-1},\ldots,c_{1}\right]  =\left[
\underline{c_{H},c_{H-1},\ldots,c_{1}},\ldots,\underline{c_{H},c_{H-1}%
,\ldots,c_{1}}\right]
\]
for all $k\geq1.$ Let $\alpha^{\ast}$ and $N(\alpha)$ be the conjugate and the
norm of $\alpha,$ respectively. By applying Galois Theorem (\cite[\S \ 23]%
{Per}, \cite[Exercise 4.9]{Duv}), we get%
\begin{equation}
\lim_{k\rightarrow\infty}\frac{q_{kH}}{q_{kH-1}}=-\frac{1}{\alpha^{\ast}%
}=-\frac{\alpha}{N(\alpha)}:=\beta_{1}. \label{Formule1}%
\end{equation}
Let $\beta_{n}$ be defined by $\beta_{n}:=c_{n-1}+1/\beta_{n-1}$ for
$n=2,\ldots,H.$ Then by (\ref{Formule}) and (\ref{Formule1}) we obtain%
\[
\mu\left(  b_{0}+\frac{a_{1}}{b_{1}}%
\genfrac{}{}{0pt}{}{{}}{+}%
\frac{a_{2}}{b_{2}}%
\genfrac{}{}{0pt}{}{{}}{+}%
\frac{a_{3}}{b_{3}}%
\genfrac{}{}{0pt}{}{{}}{+\cdots}%
\right)  =1+\max\left(  \beta_{1},\ldots,\beta_{H}\right)
\]
for any sequence $(a_{n})_{n\geq1}$ with $a_{n}=\pm1.$ For example, let
$c,d\in\mathbb{Z}_{>0}$ and%
\[
\alpha=\left[  \underline{c,d},\underline{c,d},\underline{c,d},\underline{c,d}%
,\ldots\right]  .
\]
Then $d\alpha^{2}-cd\alpha-c=0$ and $\mu\left(  S_{b}(\alpha)\right)
=1+\max\left(  \alpha,\alpha d/c\right)  .$ This generalizes (\ref{NbreOr}%
).\medskip

In Example 3, we could also have used \cite[Corollary 4]{DS}, and the same is
true for Example 1 (but not for Example 2). We give a last example, where the
irrationality exponent is greater than 2 and \cite[Corollary 4]{DS} is not
applicable.\medskip

\textbf{Example 4.} Let $\sigma>1$. For $k\geq1,$ define $b_{3k-2}%
=b_{3k-1}=2,$ $b_{3k}=\left\lfloor 2^{\sigma^{k}}\right\rfloor .$ Then
\[
\limsup_{n\rightarrow\infty}\frac{\log b_{n+1}}{\log\left(  b_{1}b_{2}\cdots
b_{n}\right)  }=\limsup_{k\rightarrow\infty}\frac{\log b_{3k}}{\log\left(
b_{1}b_{2}\cdots b_{3k-1}\right)  }=\sigma-1.
\]
Corollary \ref{Cor1.1} $(B)$ applies, and we get%
\[
\mu\left(  b_{0}+\frac{a_{1}}{b_{1}}%
\genfrac{}{}{0pt}{}{{}}{+}%
\frac{a_{2}}{b_{2}}%
\genfrac{}{}{0pt}{}{{}}{+}%
\frac{a_{3}}{b_{3}}%
\genfrac{}{}{0pt}{}{{}}{+\cdots}%
\right)  =\sigma+1>2
\]
for any sequence $(a_{n})_{n\geq1}$ with $a_{n}=\pm1.$

\end{document}